\newtheorem{theorem}             {Theorem}[section]
\newtheorem{lemma}     [theorem] {Lemma}
\newtheorem{definition}[theorem] {Definition}   
\newtheorem{proposition}[theorem] {Proposition}
\newtheorem{remark}[theorem] {Remark}
\newtheorem{proto-theo}[theorem] {Proto-Theorem}   
\numberwithin{equation}{section}
\newcommand{\rarrow}{
  \xrightarrow[{\raisebox{.5mm}[1mm][0mm]
    {$\scriptstyle \rm $}}]{\raisebox{0.0mm}[0mm]
    {$\scriptstyle r$}}}
\newcommand{\tarrow}{
  \xrightarrow[{\raisebox{.5mm}[1mm][0mm]
    {$\scriptstyle \rm $}}]{\raisebox{0.0mm}[0mm]
    {$\scriptstyle  \rm 2 $}}}
\newcommand{\canarrow}{
  \xrightarrow[{\raisebox{.5mm}[1mm][0mm]
    {$\scriptstyle \rm $}}]{\raisebox{0.0mm}[0mm]
    {$\scriptstyle \rm can$}}}
\newcommand{\canarrowL}{
  \xrightarrow[{\raisebox{.5mm}[1mm][0mm]
    {$\scriptstyle\cL$}}]{\raisebox{0.0mm}[0mm]
    {$\scriptstyle \rm can$}}}
\newcommand{\cC}{\mathcal{C}}
\newcommand{\cF}{\mathcal{F}}
\newcommand{\cH}{\mathcal{H}}
\newcommand{\cI}{\mathcal{I}}
\newcommand{\cL}{\mathcal{L}}
\newcommand{\cP}{\mathcal{P}}
\newcommand{\cS}{\mathcal{S}}
\newcommand{\cT}{\mathcal{T}}
\def\bbn{{\mathbb N}}
\def\NN{{\mathbb N}}
\def\bfg{{\mathbf G}}
\def\calf{{\mathcal F}}
\def\calh{{\mathcal H}}
\def\cali{{\mathcal I}}
\def\call{{\mathcal L}}
\def\cals{{\mathcal S}}
\let\cH\calh
\let\cP\calp
\let\cF\calf
\let\cI\cali
\let\cS\cals
\let\cC\calc
\let\cL\call
\let\epsilon\varepsilon
\let\eps\epsilon
\let\subset\subseteq
\newcommand{\definetitlefootnote}[1]{%
  \newcommand\addtitlefootnote{%
    \makebox[0pt][l]{$^{*}$}%
    \footnote{\protect\@titlefootnotetext}
  }%
  \newcommand\@titlefootnotetext{\spaceskip=\z@skip $^{*}$#1}%
}
\begin{document}

\pagestyle{plain}
\thispagestyle{empty}
\footskip=30pt
\shortdate
\settimeformat{ampmtime}
\onehalfspacing


\definetitlefootnote{ An extended abstract introducing our result here (focusing just on the case of graphs) appeared in the proceedings of the XII Latin-American Algorithms, Graphs and Optimization Symposium (LAGOS 2023)~\cite{AKMMconference}}

\title[A canonical Ramsey theorem  with list constraints]%
{A canonical Ramsey theorem  with list constraints in random (hyper-)graphs\addtitlefootnote}

\author[José~D.~Alvarado]{José~D.~Alvarado}
\address{Faculty of Mathematics and Physics, University of Ljubljana, Jadranska 19, Ljubljana, Slovenia}
\email{jose.alvarado@fmf.uni-lj.si}

\author[Yoshiharu Kohayakawa]{Yoshiharu Kohayakawa}
\address{Instituto de Matem\'atica e Estat\'{\i}stica \\Universidade de 
	S\~ao Paulo, Rua do Mat\~ao~1010, 05508--090~S\~ao Paulo, Brazil}
\email{\{yoshi\,|\,mota\}@ime.usp.br}

\author[Patrick Morris]{Patrick Morris}
\address{Departament de Matem\`atiques, Universitat Polit\`ecnica de Catalunya (UPC), Carrer de Pau Gargallo 14, 08028  Barcelona, Spain.}
\email{pmorrismaths@gmail.com}
\author[Guilherme~O.~Mota]{Guilherme~O.~Mota}

\thanks{J. D. Alvarado was partially supported by FAPESP
  (2020/10796-0).  %
  Y. Kohayakawa was partially supported by FAPESP (2023/03167-5) and
  CNPq (407970/2023-1, 420838/2025-2, 315258/2023-3).  %
  P. Morris was partially supported by the Deutsche
  Forschungsgemeinschaft (DFG, German Research Foundation) Walter
  Benjamin program - project number 504502205.  %
  G. O. Mota was partially supported by CNPq (306620/2020-0,
  406248/2021-4) and FAPESP (2018/04876-1, 2019/13364-7).  %
  This study was financed in part by CAPES, Coordenação de
  Aperfeiçoamento de Pessoal de Nível Superior, Brazil, Finance
  Code~001.  FAPESP is the S\~ao Paulo Research Foundation.  CNPq is
  the National Council for Scientific and Technological Development of
  Brazil.}

\date{\today, \currenttime}

\begin{abstract}
  The celebrated canonical Ramsey theorem of Erd\H{o}s and Rado implies
  that for a given $k$-uniform hypergraph  (or $k$-graph) $H$, if $n$ is sufficiently large then any
  colouring of the edges of the complete $k$-graph $K^{(k)}_n$ gives rise to copies of $H$ that
  exhibit certain  colour patterns. We are interested in sparse random versions of this
  result and the thresholds at which the random $k$-graph $\bfg^{(k)}(n,p)$
  inherits the canonical Ramsey properties of $K^{(k)}_n$. Our main result
  here pins down this threshold when we focus on colourings that are
  constrained by some prefixed lists. This result is applied in an
  accompanying work of the authors on the threshold for the canonical
  Ramsey property (with no list constraints) in the case that $H$ is
  a (2-uniform) even cycle. 

\end{abstract}
	
\maketitle

\section{Introduction}
\label{sec:Introduction}
We begin by focusing on ($2$-uniform) graphs. For $r\in \mathbb{N}$ and graphs~$G$ and~$H$, we say~$G$ has the
\emph{$r$-Ramsey property with respect to~$H$}, denoted $G\rarrow
H$, if every colouring of the edges of $G$ with $r$ colours results in
a \textit{monochromatic} copy of~$H$, that is, a copy of~$H$ with all
its edges in the same colour. The classical theorem of
Ramsey~\cite{R30}, from which the term \textit{Ramsey theory} stems,
states that if $n$ is large enough in terms of $r$ and $H$, then
$K_n\rarrow H$. In a highly influential work, Erd\H{o}s and
Rado~\cite{ER50} explored which ``colour patterns'' are guaranteed when one
colours a graph with no restriction on the number of colours. Clearly,
monochromatic copies of $H$ are no longer guaranteed as one can colour
each edge of $K_n$ with a unique colour. In such a case, any copy of
$H$ in $K_n$ is said to be \textit{rainbow}. If $H$ contains a cycle,
it is also not the case that every colouring of $K_n$ induces either a
monochromatic or rainbow copy of $H$. Indeed, one can associate a
unique colour $c(i)$ to each vertex $i$ in $[n]=V(K_n)$ and colour
each edge $ij\in E(K_n)$ by $c(\min\{i,j\})$. Then any copy of $H$ in
$K_n$ is neither monochromatic nor rainbow but is coloured
\textit{lexicographically}.

\begin{definition}[Lexicographic colouring] \label{def:lex_copy}
  Let $H$ be a graph, $\sigma$ an ordering of $V(H)$ and
  $\chi:E(H)\rightarrow \bbn$ an edge colouring of $H$. We say that
  the pair $(H,\chi)$ is \emph{lexicographic with respect to~$\sigma$}
  if there exists an injective assignment of colours
  $\phi:V(H)\rightarrow \bbn$ such that for every edge $e=uv\in E(H)$
  with $u<_\sigma v$, we have that $\chi(e)=\phi(u)$.  If~$\chi$ is
  clear from the context, we simply say that~$H$ is lexicographic with
  respect to~$\sigma$.
\end{definition}

The celebrated \textit{canonical Ramsey theorem} of Erd\H{o}s and Rado
\cite{ER50} implies that if $n$ is large enough in terms of $m\in
\mathbb{N}$, then any colouring of $K_n$ results in a copy of $K_m$
that is either monochromatic, rainbow or lexicographic. This theorem
serves as a beautiful example of the popular Ramsey theory maxim that
there is an inevitable order amongst chaos. Applying the canonical
Ramsey theorem with $m=v(H)$ implies the existence of copies of~$H$ with certain colourings. The following definition captures this behaviour.

\begin{definition}[The canonical Ramsey property] \label{def:canonical
    ramsey property} Given a graph $H$ and an ordering~$\sigma$
  of~$V(H)$, an edge-coloured copy of $H$ is \emph{canonical with respect to
    $\sigma$} if it is monochromatic, rainbow or lexicographic with
  respect to $\sigma$.  A graph $G$ has the \emph{$H$-canonical Ramsey
    property}, denoted $G\canarrow H$, if for every edge colouring
  $\chi:E(G)\rightarrow \bbn$ and every ordering~$\sigma$ of~$V(H)$,
  there is a copy of $H$ which is canonical with respect to~$\sigma$.
   \end{definition}
 
   Note that in the case that there are neither monochromatic nor
   rainbow copies of $H$, our definition requires copies of $H$ with
   \textit{all} possible lexicographic colourings (that is, for
   every~$\sigma$, we require a lexicographic colouring with respect
   to~$\sigma$). 
   This notion of a canonical Ramsey property is therefore as strong as possible: there is no other \textit{colour pattern}\footnote{A colour pattern of a graph~$H$
     is a partition of its edge set.} that one can guarantee when avoiding monochromatic and rainbow copies of $H$. Indeed,
   as observed by Jamison and West~\cite{JW04}, the
   canonical Ramsey theorem implies that for any $H$, a set~$\cC$ of
 colour patterns 
   of~$H$ is \textit{unavoidable}
   in all colourings of large enough cliques\footnote{That is, large
     enough cliques are such that any colouring of their edges admits
     a copy of~$H$ whose edges are partitioned according to one of the
   given patterns.} 
   if and only if $\cC$
   contains the monochromatic pattern, the rainbow pattern and at
   least one lexicographic pattern.  Alternative definitions of canonical Ramsey
   properties are discussed in Section~\ref{sec:alts}.

   \subsection{Sparse Ramsey theory and random graphs}
   Returning to the setting of colourings with a bounded number of
   colours, a prominent theme in Ramsey theory has been to explore the
   existence of \textit{sparse} graphs $G$ that are $r$-Ramsey with
   respect to~$H$; see for example \cite{NR84} and the references
   therein. One famous example is the work of Frankl and
   R\"odl~\cite{FR86}, who used a random graph to construct a
   $K_4$-free graph $G$ such that $G\tarrow K_3$. This prompted
   {\L}uczak, Ruci\'nski and Voigt \cite{LRV92} to initiate the study
   of thresholds for Ramsey properties in random graphs, which has
   since become a prominent theme in probabilistic combinatorics. It
   turns out that the threshold for $\bfg(n,p)$ having the Ramsey
   property with respect to a graph $H$ is governed by the following
   parameter of $H$.

\begin{definition} \label{def:m2 density} Given a graph $H$ with at
  least two edges, the \emph{maximum $2$-density} of $H$ is defined by
  \[
    m_2(H):=\max\left\{\frac{e(F)-1}{v(F)-2}:F\subseteq H,\, v(F)>2
    \right\}.
  \]
\end{definition}

In a seminal series of papers, R\"odl and Ruci\'nski
\cite{RR93,RR94,RR95} established the threshold for the Ramsey
property when the random graph is coloured with a bounded number of
colours.  Here and throughout, we say that a function
$\hat p = \hat p(n)$ is the \emph{threshold} for a monotone increasing
graph property $\cP$ if
\[
  \lim_{n\to\infty}\Pr(\bfg(n,p)\text{ satisfies }\cP) =
  \begin{cases}0&\text{if }p =o( \hat p), \\
    1&\text{if }p =\omega( \hat p).
  \end{cases}
\]
We refer to $\hat p$ as \textit{the} threshold for $\cP$, although it
is only defined up to order of magnitude.  We now state an abridged
form of the Rödl--Ruci\'nski theorem.

 \begin{theorem}[Rödl--Ruci\'nski~\cite{RR93,RR94,RR95}]
    \label{RR-original}
    Let $r\geq 2$ be an integer and~$H$ be a graph  that is not a star forest.  Then $n^{-1/m_2(H)}$ is the
    threshold for the property that ${\bfg(n,p)\rarrow H}$.
\end{theorem}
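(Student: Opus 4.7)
The plan is to prove the two halves of the threshold statement separately. The \emph{$0$-statement} asks me to show that if $p=o(n^{-1/m_2(H)})$ then with high probability $\bfg(n,p)$ admits an $r$-colouring with no monochromatic copy of $H$; the \emph{$1$-statement} asks me to show that if $p=\omega(n^{-1/m_2(H)})$ then with high probability $\bfg(n,p)\rarrow H$. I expect the $1$-statement to be by far the harder direction.

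For the $0$-statement, I would first fix a subgraph $F\subseteq H$ attaining the maximum in Definition~\ref{def:m2 density}, and then construct an $r$-colouring of $\bfg(n,p)$ with no monochromatic copy of~$H$. The heuristic is that $p=n^{-1/m_2(H)}$ is exactly the density at which each edge of $\bfg(n,p)$ is expected to lie in $\Theta(1)$ copies of the densest subgraph~$F$, so below this density most edges lie in no copy of $F$ at all. Concretely, I would colour the edges uniformly at random with $r$ colours and then, for each would-be monochromatic copy of $H$, recolour a single edge that sits in some rare copy of $F$; a first-moment computation combined with Janson- or Chernoff-type concentration should show that this perturbation affects only a vanishing fraction of the edges, yielding a valid colouring with high probability. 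The exclusion of star forests enters precisely to guarantee $m_2(H)\geq 1$ and to rule out degenerate cases where Ramsey behaviour is controlled by local vertex phenomena rather than by subgraph densities.

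For the $1$-statement I would follow the sparse regularity paradigm. Given an arbitrary $r$-colouring $\chi$ of $\bfg(n,p)$, decompose the edge set into colour classes $G_1,\dots,G_r$ and apply a sparse regularity lemma to the coloured graph, producing a vertex partition in which every pair $(V_i,V_j)$ is $\varepsilon$-regular in each colour relative to density~$p$. A pigeonhole over the edge slots of a putative copy of $H$ locates a tuple of classes together with a single colour $c$ for which the induced bipartite graphs in colour $c$ form a regular blueprint dense enough to support~$H$. A sparse counting/embedding lemma for $H$ in regular configurations then converts this blueprint into a genuine monochromatic copy of~$H$.

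The main obstacle is precisely the sparse counting lemma. At $p=n^{-1/m_2(H)}$ the expected number of copies of the densest subgraph $F\subseteq H$ is only a polynomial factor above~$1$, so one cannot simply deduce copies of $H$ inside a regular configuration from density considerations alone. Overcoming this is the content of the K{\L}R conjecture, now a theorem established via the hypergraph container method (Conlon--Gowers; Balogh--Morris--Samotij; Saxton--Thomason); it guarantees that within $\bfg(n,p)$ the regular-blueprint configurations reliably contain the number of copies of $H$ predicted by density, which is exactly what is needed to close the argument. A secondary subtlety is that the $0$-statement for non-$2$-balanced $H$ requires a more delicate deletion argument than the outline above, since proper subgraphs $F'\subsetneq F$ of slightly lower $2$-density can still contribute many copies of $H$ through joint denseness and must be handled simultaneously.
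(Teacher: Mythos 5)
Theorem~\ref{RR-original} is not proved in this paper: it is a classical result of R\"odl and Ruci\'nski, stated with citations to~\cite{RR93,RR94,RR95} and used as a black box. There is therefore no ``paper's own proof'' to compare against, and your proposal must be judged against the known arguments in the literature and against the machinery the paper actually uses for its own results.

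Your $1$-statement outline (sparse regularity, pigeonhole in the reduced graph, then a K\L R-type sparse embedding lemma, with the embedding step justified via the container-based resolution of the K\L R conjecture) is a valid modern route and you correctly identify the sparse counting lemma as the crux. But it is worth flagging that this is neither R\"odl and Ruci\'nski's original proof (sparse regularity did not yet exist; their argument is a delicate multi-round exposure and double-counting scheme) nor the route this paper leans on: the authors emphasise the Nenadov--Steger reproof of the $1$-statement, which applies hypergraph containers \emph{directly} to a copy-encoding hypergraph without any regularity lemma, and Section~\ref{sec:fullproof} is modelled on exactly that scheme. Also, the sentence about ``pigeonhole over the edge slots of a putative copy of $H$'' is a bit garbled --- the standard step is to majority-colour the reduced graph, apply the dense Ramsey theorem to obtain a monochromatic $K_{v(H)}$ there, and then embed $H$ into the corresponding regular configuration.

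The $0$-statement sketch, however, contains a genuine gap. The plan ``colour at random, then for each would-be monochromatic copy of $H$ recolour a single edge sitting in a rare copy of $F$'' is not a sound argument: recolouring can create new monochromatic copies, so there is no guarantee the repair process terminates or produces a consistent colouring; the order of repairs is unspecified; and a single edge may be implicated in many overlapping copies at once, so a first-moment bound on the number of ``bad'' copies does not control the process. The intuition (below $n^{-1/m_2(H)}$ a typical edge lies in $o(1)$ copies of the densest subgraph $F$) is the right starting point, but converting it into a proof requires a careful structural analysis of how copies of $H$ and its $2$-dense subgraphs can overlap in $\bfg(n,p)$, together with a deterministic colouring rule exploiting that structure; the R\"odl--Ruci\'nski argument and its later streamlinings are substantially more intricate than a one-line repair. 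You flag extra difficulty for non-$2$-balanced $H$, but the issue is already present for strictly $2$-balanced $H$. Finally, the exclusion of star forests is not primarily about ensuring $m_2(H)\geq 1$ --- stars with at least two edges already have $m_2=1$ --- but rather that for star forests the Ramsey threshold is governed by maximum-degree considerations and sits strictly below $n^{-1/m_2(H)}$, so $n^{-1/m_2(H)}$ would be the \emph{wrong} answer for them.
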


\subsection{Canonical Ramsey properties of random
  graphs}
\label{sec:lists}

The motivation for the current work is to establish the threshold for
the canonical Ramsey property with respect to a given graph~$H$. Note
that for any $H$ as in Theorem~\ref{RR-original} that is not a
triangle, the threshold for $\bfg(n,p)\canarrow H$ is at least
$n^{-1/m_2(H)}$. Indeed for any such $H$, there is an
ordering~$\sigma$ of its vertices such that the lexicographic
colouring of $H$ with respect to $\sigma$ uses at least $3$
colours. Using Theorem~\ref{RR-original}, we have that when
$p=o\left(n^{-1/m_2(H)}\right)$, asymptotically almost surely (a.a.s.\
from now on) there is a $2$-colouring of $\bfg(n,p)$ that avoids
monochromatic copies of~$H$. Moreover, such a colouring avoids rainbow
and lexicographic copies of~$H$ with respect to $\sigma$, simply
because there are not enough colours available for such colour
patterns. This shows that $\bfg(n,p)$ does not have the canonical
Ramsey property for $H$ for such~$p$.
 
We believe that this lower bound is in fact the correct threshold for
the canonical Ramsey property and that when
$p=\omega\left(n^{-1/m_2(H)}\right)$, a.a.s.\ $\bfg(n,p)\canarrow H$.
Here, we provide evidence for this by focusing on colourings that
are constrained to be compatible with a given list assignment.

\begin{definition}[Colourings with list constraints] \label{def:list
    colourings} Let $1\leq r \in \bbn$ and $\cL:E(K_n)\rightarrow
  \bbn^r$ be an assignment of lists of colours to the edges of $K_n$
  (note that we allow lists to have repeated colours). We say that a
  colouring $\chi:E(G)\rightarrow \bbn$ of an $n$-vertex graph
  $G\subseteq K_n$, is \emph{compatible} with $\cL$ if for all $e\in
  E(G)$, we have that $\chi(e)\in \cL(e)$.
\end{definition}

Our main theorem shows that for any assignment $\cL$ of bounded lists
to the edges of~$K_n$, the threshold for the canonical Ramsey property
with respect to $H$ when considering colourings that are compatible
with~$\cL$ is at most~$n^{-1/m_2(H)}$.  Let us write $G\canarrowL H$
if any edge colouring~$\chi$ of~$G$ that is compatible with~$\cL$
contains a canonical copy of~$H$ with respect to~$\sigma$ for all
orderings~$\sigma$ of~$V(H)$.

\begin{theorem}\label{thm:main}
  Let~$H$ be a graph with at least two edges.  Let $1 \leq r\in \bbn$
  and let $\cL:E(K_n)\rightarrow \bbn^r$ be a list assignment of
  colours.  If $\bfg\sim\bfg(n,p)$ with
  $p = \omega\left(n^{-1/m_2(H)} \right)$, then $\bfg\canarrowL H$
  a.a.s.
\end{theorem}   

Note that for many assignments of lists $\cL$ this theorem establishes
the threshold for the canonical Ramsey property restricted to
colourings compatible with $\cL$. Indeed, by the same reasoning
discussed above, this is the case whenever there are $2$ colours that
feature on all lists.  We will deduce Theorem~\ref{thm:main} from a
more general result, namely Theorem~\ref{thm:main_list_general}, which
deals with graphs of the form $\Gamma \cap \bfg(n,p)$ where $\Gamma$
is a ``locally dense graph'' (see Section~\ref{sec:localdense} for the
relevant definitions).

\subsection{An application for even cycles}
\label{sec:even}
We believe that Theorem~\ref{thm:main} provides a natural stepping
stone towards establishing $n^{-1/m_2(H)}$ as the threshold for the
canonical Ramsey property in random graphs for all graphs~$H$ that are
not forests and are different from the triangle.  In fact, the theorem
arose naturally in the work of the authors proving a $1$-statement for
the canonical Ramsey property with respect to even cycles.  Indeed,
Theorem~\ref{thm:main} (or rather its stronger version,
Theorem~\ref{thm:main_list_general}) is a key component of the proof
of the following theorem, which is given in an accompanying
paper~\cite{AKMM-2}.

\begin{theorem}\label{thm:canon RG even-cycles}
  Let $k\geq 2$ be an integer. 
  If $p= \omega\left(
  n^{-(2k-2)/(2k-1)} \, \log{n}\right)$,  then a.a.s.\
  \[
    \bfg(n,p) \canarrow C_{2k}.
  \]
\end{theorem}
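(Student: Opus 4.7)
The plan is to reduce Theorem~\ref{thm:canon RG even-cycles} to an application of the stronger list-constrained statement Theorem~\ref{thm:main_list_general} after a suitable preprocessing of the colouring. Since $m_2(C_{2k}) = (2k-1)/(2k-2)$, Theorem~\ref{thm:main} already gives the canonical property for any \emph{fixed} list assignment at $p = \omega\bigl(n^{-(2k-2)/(2k-1)}\bigr)$, and the extra $\log n$ factor in $p$ provides the cushion needed to handle \emph{arbitrary} colourings via a cleaning and union-bound argument.

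Given a colouring $\chi$ of $\bfg \sim \bfg(n,p)$ and an ordering $\sigma$ of $V(C_{2k})$, we perform a dichotomy based on colour frequencies. Call a colour $c$ \emph{popular} at a vertex $v$ if the number of $c$-coloured edges incident to $v$ exceeds a threshold carefully tuned in terms of $k$ and $np$. For each~$v$, the number of popular colours at $v$ will be bounded by a constant $r = r(k)$. We then define a list assignment $\cL$ by taking $\cL(uv)$ to be all colours popular at~$u$ together with all colours popular at~$v$, and declare an edge $e$ \emph{bad} if $\chi(e) \notin \cL(e)$.

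If the bad edges form only a small fraction of $\bfg$, then the graph of \emph{good} edges is locally dense inside $\bfg(n,p)$ and the restriction of $\chi$ to it is compatible with $\cL$; an application of Theorem~\ref{thm:main_list_general} then yields a canonical copy of $C_{2k}$ with respect to $\sigma$. If instead many edges are bad, then at many vertices a large number of distinct colours appear, each with small multiplicity, which provides enough colour-diversity to build a rainbow copy of $C_{2k}$ directly, by a greedy embedding that only requires $\bfg(n,p)$ to be ``locally rich'' in short cycles at the stated density.

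The main obstacle is making the rainbow construction work uniformly over all colourings $\chi$ rather than for a fixed adversarial one, as is typical for $1$-statements of this kind. This will require either a first-moment/deletion-style argument or a container-type global structural lemma establishing that a.a.s.\ every suitably dense subgraph of $\bfg(n,p)$ contains a copy of $C_{2k}$ whose colours can be prescribed with enough freedom. The extra $\log n$ factor in~$p$ is precisely what absorbs the probabilistic cost of this uniform bound; calibrating the threshold defining ``popular'' so that both the list-Ramsey application and the rainbow construction succeed with the same parameters, and arranging for a single list assignment $\cL$ to handle all orderings $\sigma$ simultaneously (or else absorbing a further union bound over the $\sigma$), are the delicate bookkeeping issues that one would expect to dominate the technical work.
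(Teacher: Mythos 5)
This paper does not actually prove Theorem~\ref{thm:canon RG even-cycles}: the text explicitly states that the proof ``is given in an accompanying paper \cite{AKMM-2}'', and Theorem~\ref{thm:canon RG even-cycles} appears here only to illustrate an application of Theorem~\ref{thm:main_list_general}. So there is no proof in the present source to compare your attempt against, beyond the authors' statement that Theorem~\ref{thm:main_list_general} is ``a key component'' --- a point your proposal is consistent with.

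Evaluating the proposal on its own terms, the overall shape is plausible, but the argument as written has two genuine gaps. First, the dependency problem: Theorem~\ref{thm:main_list_general} is a statement about a \emph{fixed} locally dense host $\Gamma$ and a \emph{fixed} list assignment $\cL$, yet in your reduction both the good-edge subgraph and the list assignment are functions of the adversarial colouring $\chi$, which in turn is defined on the random $\bfg$. You correctly anticipate that this is resolved via a union bound paid for by the $\log n$ factor, but you never say over what the union bound runs. A natural candidate is the per-vertex profile of popular colours (roughly $n^{O(rn)}$ possibilities, with logarithm $O(n\log n)\ll pn^2$, so affordable), but even after fixing that profile the set of ``good'' edges --- those whose colour lies in the induced list --- still depends on $\chi$ and hence on $\bfg$; it is therefore not a fixed $\Gamma$, and Theorem~\ref{thm:main_list_general} cannot be invoked as a black box. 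Either a modified version of the theorem that tolerates a sparse set of list violations, or a different way of fixing $\Gamma$ in advance (e.g.\ a two-round exposure), is needed, and your sketch does not supply one. Second, the ``many bad edges'' branch is essentially asserted rather than argued: that a colour is unpopular at both endpoints of an edge does not preclude it from being globally common, and extracting a rainbow $C_{2k}$ uniformly over all such colourings requires a concrete structural lemma (not merely local richness of cycles in $\bfg(n,p)$) that you acknowledge but do not describe. Until those two steps are made precise, the reduction does not close.
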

As we mentioned before, $n^{-1/m_2(H)}$ is a lower bound for the
threshold for the canonical Ramsey property with respect to even
cycles. Thus, since~$m_2(C_{2k})=(2k-1)/(2k-2)$,
Theorem~\ref{thm:canon RG even-cycles} establishes the threshold for
the canonical Ramsey property with respect to even cycles, up to the
$\log$ factor.

\subsection{Hypergraphs} \label{sec:hyper}

With our methods, we can also prove an analogue of Theorem \ref{thm:main} for hypergraphs. In order to introduce this, we need to adapt our definitions appropriately. For $k\geq 2$, we refer to $k$-uniform hypergraphs as $k$-graphs, use the notation $K^{(k)}_n$ to denote the complete $k$-graph on $n$ vertices and let $\bfg^{(k)}(n,p)$ denote the binomial random $k$-graph obtained by keeping each edge of $K_n^{(k)}$ independently with probability $p=p(n)$.  

Firstly, it is far from clear how to generalise the canonical Ramsey property (Definition~\ref{def:canonical ramsey property}) to $k$-graphs. This was done already by Erd\H{o}s and Rado \cite{ER50}. To start with, we make the following definition. 

\begin{definition}[Projection maps] \label{def:set projection}
    Let $2\leq k\in \NN$ and let $V$ be a set of size $v\geq k$
    and  $\sigma$ an ordering of $V$. Then for a (possibly empty)
    set $S\subseteq [k]$, we define the \emph{$S$-projection map}
    $\pi_S$ (with respect to $\sigma$) to be a function 
    $\pi_S:\binom{V}{k}\rightarrow\binom{V}{|S|}$ such that
    \[\pi_S(T)=\{v\in T:\text { $v$ is the $i$-th element of $T$ in the
      ordering $\sigma$ for some $i\in S$\}}.\]
\end{definition}

In words, the map $\pi_S$ pulls out the elements of an ordered $k$-set  that occupy the positions dictated by $S$. Note that $\pi_\emptyset(T)=\emptyset$  whilst $\pi_{[k]}(T)=T$ for all $T\in \binom{V}{k}$.  We also need the following definition. 

\begin{definition}[The reversal involution]
    For any set $S\subseteq [k]$, let \[\iota(S)=\{k-x+1:x\in S\}\] be the \emph{reverse} of~$S$. 
\end{definition}

The map $\iota:2^{[k]}\rightarrow 2^{[k]}$ partitions $2^{[k]}$ into  blocks $\{S,\iota(S)\}$ (some of which are pairs of sets and others are singletons).  
We say a collection $\cT\subseteq 2^{[k]}$ is a \textit{transversal} for the reversal involution if $\cT$ contains one set from each block. 
We can now describe the collection of canonical colourings of a hypergraph $H$.

\begin{definition}[Canonical copies]
  \label{def:canon colourings}
  Let $2\leq k\in \NN$ and $\cT\subseteq 2^{[k]}$ a transversal for
  the reversal involution on $[k]$. Further, let $H$ be a $k$-graph,
  $\sigma$ an ordering of $V(H)$ and $\chi:E(H)\rightarrow \mathbb{N}$
  an edge colouring of $H$. We say $(H,\chi)$ is \emph{canonical} with
  respect to $\sigma$ and $\cT$ if there is $S\in \cT$ and an
  \emph{injective} assignment of colours
  $\phi:\binom{V(H)}{|S|}\rightarrow \NN$ such that for every
  $e\in E(H)$, we have that $\chi(e)=\phi(\pi_S(e))$, where $\pi_S$ is
  the $S$-projection map with respect to $\sigma$.  If~$\chi$ is clear
  from the context, we simply say that~$H$ is canonical with respect
  to~$\sigma$ and~$\cT$.
\end{definition}

The canonical copies given by Definition \ref{def:canon colourings}
give \emph{all} the colour patterns that we are interested 
in. Indeed, if a copy of $H$ is canonical because $S=\emptyset$, then
this copy is monochromatic and for $S=[k]$, this copy is rainbow. The
intermediate $S$ cover all the other colour patterns. For example,
when $k=2$, the set $S=\{1\}$ corresponds to a copy that is
lexicographic with respect to the ordering $\sigma$ (Definition
\ref{def:lex_copy}). 
In fact, in Definition~\ref{def:canonical ramsey property} (where $k=2$), we define canonical copies with respect to $\sigma$ with the convention of choosing $\cT=\{\emptyset, \{1\},\{1,2\}\}$. 
If the transversal $\cT$ rather contains  the set $S=\{2\}$, this  corresponds to a copy of
$H$ that is lexicographic with respect to the ordering on~$V(H)$ which is reverse to $\sigma$. Similarly, for larger~$k$, when considering
all orderings~$\sigma$ of~$V(H)$ and transversals~$\cT$, certain colour patterns
of~$H$ will appear multiple times.

As with the graph case, the canonical Ramsey theorem of Erd\H{o}s and Rado \cite{ER50} implies that for any $2\leq k\in \NN$ and any $k$-graph $H$, if $n$ is sufficiently large, then $K_n^{(k)}$  has the \textit{$H$-canonical Ramsey property}, that is, for any edge colouring of $E(K_n^{(k)})$, any ordering $\sigma$ of $V(H)$ and any transversal $\cT$ for the reverse involution on $[k]$, there is a canonical copy of $H$ with respect to $\sigma$ and $\cT$.

\vspace{2mm}

Similarly, in the study of Ramsey properties of random graphs the
situation is considerably more involved for hypergraphs of uniformity
greater than 2. Indeed, such properties are not fully understood to
this date. The analogue of Definition~\ref{def:m2 density} for $3\leq
k\in \NN$ is given by the \emph{maximal $k$-density} of $H$ defined by
\begin{equation} \label{eq:mkdenisty}
m_k(H):=\max\left\{\frac{e(F)-1}{v(F)-k}:F\subseteq H,\, v(F)>k
    \right\},\end{equation}
where we assume that~$H$ has at least two edges.  The generalisation
of the $1$-statement of Theorem \ref{RR-original} to hypergraphs, by
which we mean that $\bfg^{(k)}(n,p)$ a.a.s.\ has the $H$-Ramsey property (for any number  of colours $r\geq 2$) when $p=\omega(n^{-1/m_k(H)})$, has been established for all $H$. However, this  took much longer than the graph case, eventually being resolved  by Friedgut, R\"odl and Schacht~\cite{FRS10} and, independently,  by Conlon and Gowers~\cite{CG16}.  
A corresponding $0$-statement has also been proven for various hypergraphs, in particular for cliques~\cite{NPSS17,T13}. 
The natural expectation that~$n^{-1/m_k(H)}$ should indeed be the
threshold for the $H$-Ramsey property in~$\bfg^{(k)}(n,p)$ (as in
Theorem~\ref{RR-original}), except for some simple cases, was
shattered by Gugelmann, Nenadov, Person, \v{S}kori\'c and Steger~\cite{GNPSST17}, who presented a family of exceptions richer than
in the graph case.  Nevertheless, this expectation has
been proved to hold for `most'
$k$-graphs~$H$ by Bowtell, Hancock and Hyde~\cite{bowtell25:_proof_kohay}.

As with Theorem \ref{thm:main}, we show that when considering
colourings that are compatible with some prefixed lists, taking
$p=\omega(n^{-1/m_k(H)})$ suffices to find canonical
copies. The definition of $G\canarrowL H$ in hypergraphs is completely analogous to the graph case except that one now insists that we find canonical copies with respect to all orderings $\sigma$ of $V(H)$ \textit{and} all transversals $\cT$ of the reverse involution on $[k$].

\begin{theorem}\label{thm:main-hyper}
  Let $2\leq k\in \NN$ and $H$ be a $k$-graph with at least two
  edges. Further, let $1 \leq r\in \bbn$ and
  $\cL:E(K^{(k)}_n)\rightarrow \bbn^r$ be a list assignment of
  colours.  If $\bfg\sim\bfg^{(k)}(n,p)$ with
  $p = \omega\left(n^{-1/m_k(H)} \right)$, then $\bfg\canarrowL H$
  a.a.s.
\end{theorem}   

\subsection{Alternative definitions for the canonical Ramsey property}
\label{sec:alts}

The original canonical Ramsey theorem of Erd\H{o}s and Rado~\cite{ER50} was
stated using slightly different notions to those used here. Indeed, they
consider \textit{ordered} copies of $K^{(k)}_m$ and $K^{(k)}_n$ on vertex sets $[m]$ and $[n]$, respectively. They prove that
if~$n$ is large enough in terms of~$m$, then for any colouring $\chi:E(K_n^{(k)})\rightarrow \NN$, one can find an \emph{ordered
embedding}\footnote{By an ordered embedding, we mean that $\psi(i)<\psi(j)$ in $[n]$ if and only if $i<j$ in $[m]$.} $\psi$
of $K^{(k)}_m$ in $K^{(k)}_n$  which is \textit{canonical} with respect to the ordering on $V(K_m^{(k)})=[m]$. In more detail, they show that   
 there is some $S\subseteq [k]$ and an injective assignment of colours $\phi:\binom{[m]}{|S|}\rightarrow \NN$ such that for every $e\in E(K^{(k)}_m)$, we have that $\chi(\psi(e))=\phi(\pi_S(e))$, with $\pi_S$ being the $S$-projection map with respect to the standard ordering on $[m]$.

Using this, we can see that for any $H$ with $m=|V(H)|$, if $n$ is sufficiently large, then $K_n^{(k)}\canarrow H$, that is, $K_n^{(k)}$ has the $H$-canonical Ramsey property. Indeed, 
fix some edge colouring $\chi$ of $E(K_n^{(k)})$, some ordering $\sigma$ of $V(H)$ and a transversal $\cT\subset2^{[k]}$ for the reversal involution on $[k]$. 
Labelling the vertices of $K_n^{(k)}$ by $[n]$ arbitrarily, 
the theorem of Erd\H{o}s and Rado~\cite{ER50}  gives some ordered embedding $\psi:[m]\rightarrow [n]$, a set $S\subseteq [k]$ and an injective map $\phi:\binom{[m]}{|S|}\rightarrow \NN$ such that $\chi(\psi(e))=\phi(\pi_S(e))$ for all $e\in \binom{[m]}{k}$. Now if $S\in \cT$, then we can embed $H$  by mapping the $i^{\text{th}}$ vertex of $V(H)$ according to $\sigma$, to the vertex $\psi(i)\in [n]$ for $i\in [m]$. If, on the other hand $S\notin \cT$, then we must have $\iota(S)\in \cT$ and we can embed $H$ by mapping the $i^{\text{th}}$ vertex of $V(H)$ according to $\sigma$ to the vertex $\psi(m-i+1)\in [n]$. In either case, the resulting copy of $H$ is canonical with respect to $\sigma$ and $\cT$.

The argument above shows that our notion of $H$-canonical Ramsey
property makes sense, but the reader may wonder if there are
alternative ways to define such a property. Indeed, one alternative which has more in common with the original theorem of Erd\H{o}s and Rado~\cite{ER50} is to look at \textit{ordered copies} of $H$ and $K_n^{(k)}$ with a completely analogous notion of an ordered embedding of $H$ being canonical as in  the definition in the theorem of Erd\H{o}s and Rado~\cite{ER50} when $H=K_m^{(k)}$. We remark that it would certainly be possible to work with this definition and prove our results here in the context of this ordered notion of canonical Ramsey properties.

There are several reasons for our choice to use the alternative characterisation of the canonical Ramsey property as in Definition \ref{def:canonical ramsey property} (see also Definition \ref{def:canon colourings}). Firstly, working with ordered graphs and ordered embeddings diverges from the classical Ramsey setting in unordered (hyper-)graphs. Secondly (and more importantly) when dealing with $H$ that are not complete, the ordered notion of the canonical Ramsey property is in fact weaker. This is already apparent in the setting of graphs where the ordered notion will find an ordered copy of $H$ that is either monochromatic, rainbow, min-coloured (each edge inherits its colour from its smaller endpoint) or max-coloured (each edge inherits its colour from its larger endpoint). Now consider $H$ to be a triangle with a pendent edge. By taking an ordering $\sigma$ of the vertices of $H$ in which the (unique) vertex of degree 3  comes first, our notion in Definition \ref{def:canonical ramsey property} guarantees that if there is no copy of $H$ which is monochromatic or rainbow, then one can find a copy of $H$ which has all vertices incident to the degree 3 vertex in one colour and the remaining edge in a distinct colour. On the other hand, such a colouring is not forced when considering the ordered notion of the canonical Ramsey property. 
Indeed, if one  uses only that in the absence of  monochromatic and rainbow $H$, one finds min- or max- coloured copies of all orderings of $H$, then one can adversarially choose min- or max- for each order in such a way that the considered colouring is never output.

This is the principal reason for 
why we have opted to state our results here without looking for
ordered embeddings, so as to capture all sets of unavoidable colour
patterns for graphs, as in \cite{AJ05,JW04}. The definition in hypergraphs is slightly more cumbersome (having to vary over all transversals $\cT$ of the reverse involution on $[k]$) but also captures more colour patterns than the canonical Ramsey notion via ordered hypergraphs.

\subsection{Related work}
\label{sec:related}

Until very recently, to our knowledge, there have been no results on
canonical Ramsey properties in random graphs. However, simultaneously
and independently to our work here and in~\cite{AKMM-2}, Kam{\v{c}}ev
and Schacht~\cite{KS23, kamcev25:_canon} obtained a remarkable result, completely
resolving the problem for the case when~$H=K_m$. Indeed, they show
that for $4\leq m\in \mathbb{N}$ and $p=\omega(n^{-2/(m+1)})$,
a.a.s. $\bfg(n,p)\canarrow K_m$. As $m_2(K_m)=(m+1)/2$, this
establishes the threshold for the canonical Ramsey property with
respect to complete (2-uniform) graphs~$K_m$. In contrast to our proofs, which
appeal to the method of hypergraph containers, their proof relies on
the transference principle of Conlon and Gowers~\cite{CG16}. Their
methods also allow them to obtain partial results in the case where
$H$ is a \textit{strictly balanced} graph, that is when $m_2(H)$ is achieved
only by $H$ itself. For such a graph $H$ and for
$p=\omega(n^{-1/m_2(H)})$, they can prove the existence of
monochromatic, rainbow or \textit{some} lexicographic copies
of~$H$ in any colouring of $\bfg\sim\bfg(n,p)$. Intriguingly, their
results do not generalise to the hypergraph setting, where it seems
new ideas are necessary.  

\subsection{Organisation and conventions} As the setting of graphs
(Theorem~\ref{thm:main}) is notationally simpler than the general
result for $k$-graphs (Theorem~\ref{thm:main-hyper}), we first
restrict our attention to Theorem~\ref{thm:main} and then simply
discuss the proof of Theorem~\ref{thm:main-hyper}, which is almost
identical, in Section~\ref{sec:hyperproof}. Also, for graphs, we
actually prove a stronger statement,
Theorem~\ref{thm:main_list_general}, which we need in our application
in~\cite{AKMM-2} and from which Theorem~\ref{thm:main} will
follow. Before proving Theorem \ref{thm:main_list_general} in
Section~\ref{sec:fullproof}, we provide some necessary tools in
Section~\ref{sec:prelims} and an overview of our proof in
Section~\ref{sec:proof}.

As usual, we omit floor and ceiling symbols whenever they are not
essential.  Finally, note that, in our main results, we may assume
without loss of generality that our target graph~$H$ has no isolated
vertices.  Assuming this will occasionally simplify our exposition.

\section{Preliminaries} \label{sec:prelims} 

In this section, we collect the necessary theory and tools needed in
our proof.  We will introduce the method of containers in
Section~\ref{sec:containers} and the theory of locally dense graphs in
Section~\ref{sec:localdense}. Before all of this though, we collect
the relevant notation.

For simplicity, given a graph $H$ we use $v(H)$ and $e(H)$,
respectively, for $|V(H)|$ and~$|E(H)|$.  For a $k$-uniform hypergraph
$\cH$ and for $U\subseteq V(\cH)$, we let $\cH[U]$ denote the
hypergraph induced by $\cH$ on $U$. Furthermore, for any vertex subset
$T\subset V(\cH)$, let $d_\cH(T)$ denote the number of edges of $\cH$
containing $T$ and, for $ 0\leq j\leq k$, let
$\Delta_j(\cH):=\max\{d_\cH(T): T\subset V(\cH), |T|=j\}$ denote the
maximum degree of a vertex set of size $j$ in $\cH$.

The binomial random graph $\bfg(n,p)$ refers to the probability
distribution of graphs on vertex set $[n]$ obtained by taking every
possible edge independently with probability $p=p(n)$. We say an event
happens asymptotically almost surely (a.a.s.\ for short) in $\bfg \sim
\bfg(n,p)$ if the probability that it happens tends to 1 as $n$ tends
to infinity. We will also use standard asymptotic notation throughout,
with asymptotics always being taken as the number of vertices $n$
tends to infinity. Finally, we use the notation $a=b\pm c$ to denote a
number $a$ between $b-c$ and $b+c$ and we omit floors and ceilings
throughout, so as not to clutter the arguments.

\subsection{The method of containers}
\label{sec:containers}

We will appeal to the method of hypergraph containers, developed by
Balogh, Morris and Samotij~\cite{BMS14}, and independently, by Saxton
and Thomason~\cite{ST15}. The key idea underlying this method is that
if a uniform hypergraph has an edge set that is evenly distributed,
then one can group the independent sets of the hypergraph into a
well-behaved collection of \emph{containers}. In more detail, these
containers are vertex subsets that are almost independent (in that
they induce few edges of the hypergraph), every independent set of the
hypergraph lies in some container and, crucially, we have a bound on
the number of containers. As there are far fewer containers than
independent sets in the hypergraph, reasoning about containers rather
than independent sets leads to more efficient arguments and this
technique has proven to be extremely powerful. Indeed, the setting of
independent sets in hypergraphs can be used to encode a wide range of
problems in combinatorics and the method of hypergraph containers has
been successfully exploited in a multitude of different
settings. Particularly relevant to our work here are the applications
of the method in sparse Ramsey theory, a program which was initiated
by Nenadov and Steger~\cite{NS16}, who reproved the 1-statement of Theorem \ref{RR-original} using containers.

Below, we state the container lemma in the form given in \cite[Theorem
2.2]{BMS14}. Before doing so, we need to establish some terminology
and definitions.

\begin{definition}[$(\cH,\eps)$-abundant set families]
  \label{def:H-abund}
  Let $\cH=(V,E)$ be a hypergraph and let $0<\eps\leq 1$. We say a
  family $\calf\subseteq 2^V$ is \emph{$(\calh,\eps)$-abundant} if the
  following hold:
  \begin{enumerate}
  \item \label{cond:1} $\calf$ is \emph{increasing}: for all
    $A,B\subseteq V$ with $A\in \calf$ and $A\subset B$, we have that
    $B\in \calf$;
  \item \label{cond:2} $\calf$ contains only large vertex sets: for
    all $A\in \cF$, we have that $|A|\geq \eps v(\cH)$;
  \item \label{cond:3} $\cH$ is \emph{$(\cF,\eps)$-dense}: for all
    $A\in \cF$, we have that $e(\cH[A])\geq \eps e(\cH)$.
  \end{enumerate}
\end{definition}

For a hypergraph $\cH=(V,E)$, we also define $\cali(\cH)\subset 2^V$
to be the collection of independent vertex sets in $\cH$. We now state
the container theorem \cite[Theorem 2.2]{BMS14} and we refer to the
discussion in \cite{BMS14} for motivation and context.

\begin{theorem}[Hypergraph Container Theorem]\label{thm:container-BMS}
  For every $k\in \bbn$ and $\eps,D_0>0$, there exists $D>0$ such that
  the following holds for $k$-uniform hypergraphs
  $\cH$. If~$\cF\subseteq 2^{V(\cH)}$ is an $(\cH, \eps)$-abundant set
  family and $q\in (0,1)$ is such that for each $j\in [k]$ we have
  $\Delta_{j}(\calh) \leq D_0 q^{j-1} e(\calh)/v(\calh)$, then there
  exists a family $\cals\subset 2^{V(\cH)}$ of `fingerprints' and two
  functions $f : \cals \to 2^{V(\cH)}\setminus \cF$ and $g :
  \cali(\calh) \to \cals$ such that:

\begin{enumerate}[(a)]
\item \label{item: not many containers} (`small' fingerprints)
  for each $S\in \cS$, we have that $|S|\leq Dqv(\cH)$;
\item \label{item: containment}(containment) for each $I\in
  \cali(\calh)$, we have that $g(I) \subset I$ and $I\setminus g(I)
  \subseteq f(g(I))$.
\end{enumerate}
\end{theorem}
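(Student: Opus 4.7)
The plan is to prove the theorem by induction on the uniformity $k$, following the ``scythe algorithm'' paradigm of Balogh--Morris--Samotij. The base case $k=1$ is essentially direct, since independent sets in a $1$-uniform hypergraph are contained in a fixed ``non-edge'' vertex set, so a single container and empty fingerprint suffice.

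For $k \geq 2$, I would fix a total order on $V(\cH)$ once and for all. Given an independent set $I \in \cali(\cH)$, build the fingerprint $g(I)$ and the container $f(g(I))$ simultaneously via an iterative peeling procedure, starting with $A_0 := V(\cH)$ and $S_0 := \emptyset$. At step $i$, if $A_i \notin \cF$, halt and output $S := S_i$ together with $f(S) := A_i$. Otherwise, condition \ref{cond:3} of Definition \ref{def:H-abund} ensures $e(\cH[A_i]) \geq \eps e(\cH)$; combined with the uniform degree bound $\Delta_j(\cH) \leq D_0 q^{j-1} e(\cH)/v(\cH)$ across $j \in [k]$, an averaging argument produces a vertex $v_i \in I \cap A_i$, the earliest under the fixed order, meeting a prescribed co-degree threshold with respect to the previously chosen vertices. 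Append $v_i$ to $S_i$, form the link hypergraph $\cL_i$ on $A_i \setminus \{v_i\}$ whose edges are $\{e \setminus \{v_i\} : v_i \in e \in \cH[A_i]\}$, and apply the theorem inductively to $\cL_i$ with an appropriately derived abundant family, producing a sub-fingerprint $T_i$ and a container $C_i$. Define $A_{i+1}$ by deleting from $A_i$ all vertices forced out of $I$: namely, vertices completing an edge of $\cH$ with $v_i$, together with any vertex in $V(\cL_i) \setminus (T_i \cup C_i)$.

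The essential bookkeeping is twofold: (a) the algorithm terminates, i.e.\ some $A_i$ eventually leaves $\cF$, and (b) the total fingerprint size satisfies $|S| \leq Dqv(\cH)$ for some $D = D(\eps, D_0, k)$. Both are consequences of the uniform degree hypothesis, whose normalization by $q^{j-1}$ is precisely what calibrates the peeling obtainable at each uniformity level, ensuring that the outer loop and the recursive inner calls together consume only $O(qv(\cH))$ fingerprint vertices in total. Determinism of the procedure makes $f$ and $g$ well defined and lets $f(g(I))$ be recovered from $g(I)$ alone by replaying the algorithm without reference to $I$.

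The hard part will be the precise calibration of the averaging step: one must design a potential function on candidate vertices so that degree contributions across all $1 \leq j \leq k$ are simultaneously exploited, and the amount of peeling obtained per fingerprint vertex added is sufficient in the relevant regime. A secondary technical difficulty is constructing an abundant family $\cF_i'$ for each link $\cL_i$ so that the inductive hypothesis applies without uncontrolled losses in $\eps$ and $D_0$; weakening these parameters along the recursion is tolerable provided the final constant $D$ absorbs only finitely many (one per uniformity level) such losses.
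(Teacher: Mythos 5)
This statement is not proved in the paper: it is quoted verbatim from Balogh--Morris--Samotij \cite[Theorem 2.2]{BMS14} and used as a black box. There is therefore no in-paper proof to compare your sketch against, and what you have written should be assessed as a standalone sketch of the BMS argument rather than as an alternative to anything in this manuscript.

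As such a sketch, the high-level shape is right: the BMS proof does proceed by an iterative peeling (``scythe'') procedure with recursion on link hypergraphs to reduce uniformity, and the deterministic replay of the algorithm from the fingerprint is indeed what makes $f$ well defined. But the proposal is a description of the strategy rather than a proof, and the part you call ``the hard part'' (the potential-function bookkeeping that makes the degree hypothesis $\Delta_j(\cH)\le D_0 q^{j-1}e(\cH)/v(\cH)$ deliver both termination and the bound $|S|\le Dqv(\cH)$) is in fact the majority of the content of the BMS argument. One concrete concern in what you did write: you describe selecting $v_i$ as a vertex of $I\cap A_i$ satisfying a degree threshold, and separately claim that $f(g(I))$ is recoverable by replaying the algorithm ``without reference to $I$.'' For the replay to be well defined from $S$ alone, the \emph{candidate} vertex at each step must be chosen deterministically from $A_i$ and the prior trajectory; $I$ may only be consulted to decide whether the candidate is absorbed into the fingerprint or discarded from $A_i$. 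If the selection rule itself quantifies over $I$, reconstructibility of $f(S)$ breaks. This two-case structure (candidate in $I$ versus not) is present in BMS and is what makes the whole scheme consistent; your sketch elides it. A second issue is that you leave unaddressed how the abundance hypothesis is transported to the link hypergraphs $\cL_i$ in a form strong enough to invoke the inductive hypothesis; this requires a separate argument and cannot be waved through as a parameter loss per level. None of this is a refutation of your plan, but it does mean the proposal stops well short of a proof.
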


In applications of Theorem \ref{thm:container-BMS}, the set
$\cC:=\{f(S)\cup S:S\in \cS\}$ is usually referred to as the set of
\emph{containers} for the hypergraph $\cH$. Note that property
\ref{item: not many containers} can be used to bound the size of $\cC$
whilst property \ref{item: containment} shows that for every
independent set $I\in \cI(\cH)$ there is some $C\in \cC$ such that
$I\subseteq C$.

\subsection{Locally dense graphs} \label{sec:localdense}

We will establish our main theorem in the context of random
sparsifications of locally dense graphs, a more general setting than
$\bfg(n,p)$ which will be useful for applications. Here we collect
some key properties of locally dense graphs.

\begin{definition}[$(\rho,d$)-dense graphs]
  For $\rho,d\in (0,1]$, we say a graph $\Gamma$ is
  \emph{$(\rho,d$)-dense} if the following holds: for every $S\subset
  V(\Gamma)$ with $|S|\geq \rho n$, the induced graph $\Gamma[S]$ has
  at least $d\binom{|S|}{2}$ edges.
\end{definition}

\begin{remark} \label{rem:simple local dense} In~\cite{RR95}, the
  authors observed that in order to establish that a graph $\Gamma$
  is $(\rho,d)$-dense it suffices to deal with subsets $S\subset
  V(\Gamma)$ of cardinality exactly $\rho n$.
\end{remark}

Graphs with the $(\rho,d)$-denseness property for $\rho = o(1)$ are
often called {\em locally dense} graphs and this property can be
viewed as a weak quasirandomness property. The following result
appears in~\cite[Lemma~2]{RR95} and can be proven by induction.

\begin{proposition}\label{prop:KNRS-conjecture for cliques}
  For every $m\geq 2$ and $d >0$ there exist $\rho, c_0>0$ such
  that if $\Gamma$ is an $n$-vertex $(\rho,d)$-dense graph and $n$ is
  sufficiently large, then $\Gamma$ contains at least $c_0n^m$ copies
  of~$K_m$.
\end{proposition}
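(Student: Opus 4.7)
The plan is to proceed by induction on $m$, as hinted in the excerpt. The base case $m=2$ is immediate from the definition of $(\rho,d)$-denseness applied to $S=V(\Gamma)$, which yields at least $d\binom{n}{2}\geq (d/4)n^2$ edges, so $c_0=d/4$ works for any $\rho\in(0,1]$. For the inductive step, the strategy is to find $\Omega(n)$ vertices $v$ whose neighbourhoods inherit enough density to invoke the induction hypothesis, then build copies of $K_m$ by attaching each such $v$ to the copies of $K_{m-1}$ guaranteed inside $N(v)$.

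Concretely, I would assume the result for $m-1$ with constants $\rho_{m-1}, c_{m-1}>0$ and set $\delta:=d/4$ together with $\rho:=\rho_{m-1}\delta$. The first task is to control the set of low-degree vertices $V_{\mathrm{low}}:=\{v\in V(\Gamma):|N_\Gamma(v)|<\delta n\}$. If $|V_{\mathrm{low}}|\geq \rho n$, then $(\rho,d)$-denseness gives $e(\Gamma[V_{\mathrm{low}}])\geq d\binom{|V_{\mathrm{low}}|}{2}$, while the degree bound gives the opposite inequality $2e(\Gamma[V_{\mathrm{low}}])\leq \delta n\,|V_{\mathrm{low}}|$; comparing these forces $|V_{\mathrm{low}}|\leq n/4+1$. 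Either way, $V_{\mathrm{large}}:=V(\Gamma)\setminus V_{\mathrm{low}}$ satisfies $|V_{\mathrm{large}}|\geq n/2$ for $n$ large.

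The second task is to verify that $\Gamma[N(v)]$ is $(\rho_{m-1},d)$-dense for every $v\in V_{\mathrm{large}}$. This is direct from the parameter choice: any $S\subseteq N(v)$ with $|S|\geq \rho_{m-1}|N(v)|$ satisfies $|S|\geq \rho_{m-1}\delta n=\rho n$, so by $(\rho,d)$-denseness of $\Gamma$ we have $e(\Gamma[S])\geq d\binom{|S|}{2}$. Since $|N(v)|\geq \delta n$, the induction hypothesis applies to $\Gamma[N(v)]$ and produces at least $c_{m-1}|N(v)|^{m-1}\geq c_{m-1}\delta^{m-1}n^{m-1}$ copies of $K_{m-1}$ inside $N(v)$. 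Each such copy extends to a copy of $K_m$ in $\Gamma$ by adjoining $v$, and each $K_m$ is counted at most $m$ times across $v\in V_{\mathrm{large}}$ (once for each of its vertices lying in $V_{\mathrm{large}}$), so summing gives at least $c_0 n^m$ copies of $K_m$ with $c_0:=c_{m-1}\delta^{m-1}/(2m)$.

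The main delicate point is the coupled choice of parameters: $\rho$ must be small enough relative to $\rho_{m-1}$ and $d$ so that denseness of $\Gamma$ transfers to $\Gamma[N(v)]$, while $\delta$ must simultaneously be small compared to $d$ so that $V_{\mathrm{low}}$ is forced to be small. Decoupling these via $\delta=d/4$ and $\rho=\rho_{m-1}\delta$ is what lets the induction close cleanly; the remaining computations are routine arithmetic with constants that depend only on $m$ and $d$.
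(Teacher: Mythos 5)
Your proof is correct and is the standard inductive argument: the base case reads off directly from the definition, and the inductive step restricts to the neighbourhood of a vertex of linear degree, checks that density transfers (via the coupled choice $\rho = \rho_{m-1}\delta$), applies the inductive hypothesis there, and then controls overcounting by a factor of $m$. The low-degree argument (comparing $e(\Gamma[V_{\mathrm{low}}]) \ge d\binom{|V_{\mathrm{low}}|}{2}$ against $2e(\Gamma[V_{\mathrm{low}}]) \le \delta n |V_{\mathrm{low}}|$ to force $|V_{\mathrm{low}}| \le \delta n/d + 1$) is a clean way to guarantee linearly many usable apex vertices, and the parameter dependencies close cleanly. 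The paper itself does not spell out a proof (it cites Lemma~2 of R\"odl--Ruci\'nski~\cite{RR95} with the remark that it can be proven by induction), but your argument is exactly the kind of neighbourhood induction that reference uses; there is no genuinely different route being taken.
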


We close this section with the following result about robustness of
the locally denseness property in the sense that it is
preserved after removing a small fraction of edges of a dense graph.

\begin{proposition}\label{prop:locally-dense_resilience}
  Suppose $\rho, d, \gamma>0$. If $\Gamma$ is a $(\rho,d)$-dense graph
  on $n$ vertices with $n$ sufficiently large, then every spanning
  subgraph $\Gamma'$ of $\Gamma$ with $e(\Gamma') \geq
  (1-\gamma)e(\Gamma)$ is a $(\rho,d')$-dense graph, with $d' := d -
  (2\gamma/\rho^2)$. In particular, if $\gamma \leq \rho^2 d/4 $, then
  such $\Gamma'$ is a $(\rho,d/2)$-dense graph.
\end{proposition}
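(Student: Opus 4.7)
The plan is to proceed by a direct counting argument. By Remark~\ref{rem:simple local dense}, it suffices to check the $(\rho,d')$-denseness of $\Gamma'$ on subsets $S\subseteq V(\Gamma')=V(\Gamma)$ of size exactly $\rho n$, so fix such an $S$. The key observation is that $e(\Gamma'[S])$ differs from $e(\Gamma[S])$ by at most the total number of edges removed in passing from $\Gamma$ to $\Gamma'$, which by hypothesis is at most $\gamma e(\Gamma) \leq \gamma \binom{n}{2}$.

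Using $(\rho,d)$-denseness of $\Gamma$ applied to $S$, we would then write
\[
  e(\Gamma'[S]) \;\geq\; e(\Gamma[S]) - \gamma e(\Gamma) \;\geq\; d \binom{\rho n}{2} - \gamma \binom{n}{2}.
\]
To convert the subtracted term into a multiple of $\binom{\rho n}{2}$, I would bound $\binom{n}{2}/\binom{\rho n}{2} = n(n-1)/\bigl(\rho n(\rho n -1)\bigr)$, and observe that for $n$ sufficiently large (concretely, once $\rho n \geq 2 - \rho$) this ratio is at most $2/\rho^2$. Substituting this bound yields
\[
  e(\Gamma'[S]) \;\geq\; \Bigl( d - \tfrac{2\gamma}{\rho^2} \Bigr) \binom{\rho n}{2} \;=\; d' \binom{|S|}{2},
\]
establishing $(\rho,d')$-denseness of $\Gamma'$.

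The \emph{In particular} clause then follows by direct substitution: if $\gamma \leq \rho^2 d/4$, then $2\gamma/\rho^2 \leq d/2$, so $d' \geq d/2$.

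There isn't really a main obstacle here; the argument is essentially a one-line calculation. The only mild technical point is checking that the asymptotic comparison $\binom{n}{2}\leq (2/\rho^2)\binom{\rho n}{2}$ holds for $n$ large enough, which is why the hypothesis on $n$ appears in the statement.
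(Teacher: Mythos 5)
Your proof is correct and follows essentially the same route as the paper: remove at most $\gamma e(\Gamma)$ edges, apply $(\rho,d)$-denseness of $\Gamma$ to the set $S$, and convert the loss term into a multiple of $\binom{|S|}{2}$ using $n$ sufficiently large. The only cosmetic difference is that the paper bounds $\gamma e(\Gamma) \leq \tfrac{\gamma}{2}n^2 = \tfrac{\gamma}{\rho^2}\tfrac{|S|^2}{2}$ and then uses $|S|^2/2 \leq 2\binom{|S|}{2}$, whereas you bound $\binom{n}{2}/\binom{\rho n}{2} \leq 2/\rho^2$ directly; these are the same estimate.
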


\begin{proof}
  Let $\Gamma$ be a $(\rho,d)$-dense graph on $n$ vertices and let
  $\Gamma'$ be a spanning subgraph of $\Gamma$ with $e(\Gamma') \geq
  (1-\gamma) e(\Gamma)$ edges. If $S\subset V(\Gamma)$ has cardinality
  $\rho n$, then
  \begin{linenomath}
    \begin{align*}
      e_{\Gamma'}(S) \geq e_{\Gamma}(S)
      - \gamma e(\Gamma) \geq d\binom{|S|}{2} - \frac{\gamma}{2} n^2 =
      d\binom{|S|}{2} - \frac{\gamma}{\rho^2} \frac{|S|^2}{2} \geq d'
      \, \binom{|S|}{2},
    \end{align*}
  \end{linenomath}
  using that $n$ is sufficiently large in the last step and hence
  $|S|^2/2\leq 2 \binom{|S|}{2}$. Appealing to Remark~\ref{rem:simple
    local dense}, we have that $\Gamma'$ is a $(\rho,d')$-dense graph
  on $n$ vertices, as desired.
\end{proof}

\section{Outline of the proof}
\label{sec:proof}
 
Our proof follows the scheme of Nenadov and Steger~\cite{NS16} and
appeals to the method of hypergraph containers (see
Section~\ref{sec:containers}). We start by giving a rough outline of
the proof.

Suppose we want to find a monochromatic $H$ in any $r$-colouring of
$\bfg\sim \bfg(n,p)$. Our idea is to create an auxiliary hypergraph
$\cH$ whose vertex set consists of $r$ copies of $E(K_n)$ (one copy
for each colour) and whose edge set encodes the monochromatic copies
of~$H$ in~$K_n$.  Thus, an edge in~$\cH$ corresponds to a copy of~$H$
entirely contained in one of the~$r$ copies of~$K_n$; putting it
another way, $\cH$~is the disjoint union of~$r$ copies of the
$|E(H)|$-uniform hypergraph on~$E(K_n)$ that encodes all the
copies of~$H$ in~$K_n$.  The key observation is the following: any
colouring of~$\bfg$ that avoids monochromatic copies of~$H$ can be
identified with an independent set in~$\cH$.  Using containers, one can
efficiently group together these independent sets and identify a small
set~$\cC$ of \emph{containers} such that each independent set of $\cH$
belongs to one such container. The proof then proceeds by showing
that, for each \emph{fixed} container $C\in \cC$, it is very
unlikely\footnote{That is, it happens with probability
  $\exp(-\Omega(n^2p))$.} that the graph $\bfg$ lies within $C$. By
this, we mean that it is unlikely that there is a colouring of $\bfg$
that, when mapped in the obvious way to a vertex subset of $\cH$,
corresponds to a subset of $C$. The proof follows by performing a
union bound over the choices for a container $C\in\cC$.

This containers approach relies crucially on the fact that the
hypergraph $\cH$ that encodes the monochromatic copies of $H$ has
$v(\cH)=O(n^2)$ and so satisfies the the degree constraints of
Theorem~\ref{thm:container-BMS} with $q=\Theta(n^{-1/m_2(H)})$. Having
an unbounded number of copies of $E(K_n)$ in~$V(\cH)$, which
corresponds to an unbounded number of colours available, leads to an
adjustment on the degree constraints and the parameter $q$ in
Theorem~\ref{thm:container-BMS}. This renders the upper bound on the
number of containers useless and the proof falls apart. In our proof
we avoid such a problem by creating a hypergraph whose vertex set is
composed by~$r$ copies of~$E(K_n)$ with each copy of an edge
corresponding to a choice of colour of that edge according to the list
of available colours.  The edge set of the hypergraph then encodes the
canonical copies of $H$. The fact that each list is bounded allows us
to apply Theorem~\ref{thm:container-BMS} to our hypergraph with the
correct parameters.  As previously discussed, we can in fact prove the
following more applicable result, which is a strengthening of
Theorem~\ref{thm:main} and applies to random sparsifications of
locally dense graphs.

\begin{theorem}[Sparse canonical Ramsey theorem with list
  constraints]
  \label{thm:main_list_general}
  Let $H$ be a graph with at least two edges and suppose
  $1 \leq r\in \bbn$ and $d>0$.  Then there exist $\rho,\,c >0$ such
  that the following holds.  Let~$\sigma$ be an ordering of $V(H)$,
  let $\Gamma$ be an $n$-vertex $(\rho,d)$-dense graph and let
  $\call:E(\Gamma)\rightarrow \mathbb{N}^r$ be a list assignment.  If
  $p = \omega(n^{-1/m_2(H)})$ and~$n$ is large enough, then with
  probability at least $1- \exp{(-cp n^2)}$, any edge colouring $\chi$
  of $\Gamma \cap \bfg(n,p)$ that is compatible with $\cL$ contains a
  canonical copy of $H$ with respect to $\sigma$.
\end{theorem}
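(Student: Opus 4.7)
The plan is to execute the containers strategy outlined in Section~\ref{sec:proof}. First, we construct an $m$-uniform auxiliary hypergraph $\cH$ (where $m=e(H)$) on vertex set $V(\cH) = E(\Gamma) \times [r]$, where the vertex $(e,i)$ encodes ``edge~$e$ is assigned the colour $\cL(e)[i]$''; the edges of $\cH$ are the $m$-sets $\{(e_1,i_1),\ldots,(e_m,i_m)\}$ such that $\{e_1,\ldots,e_m\}$ spans a copy of $H$ in $\Gamma$ and the colouring $e_j\mapsto \cL(e_j)[i_j]$ is canonical with respect to~$\sigma$. Any colouring~$\chi$ of $\Gamma\cap\bfg$ compatible with $\cL$ and avoiding canonical copies of~$H$ then corresponds to an independent set in $\cH$: for each $e\in E(\Gamma)\cap E(\bfg)$, pick some $i_\chi(e)\in [r]$ with $\cL(e)[i_\chi(e)]=\chi(e)$ and set $I_\chi=\{(e,i_\chi(e)):e\in E(\Gamma)\cap E(\bfg)\}\in\cI(\cH)$.

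Next, we apply the hypergraph container theorem (Theorem~\ref{thm:container-BMS}) to $\cH$ with $q=C n^{-1/m_2(H)}$ for a large constant~$C$, and the abundant family
\[
  \cF=\bigl\{A\subseteq V(\cH):|A|\geq \epsilon v(\cH)\text{ and }e(\cH[A])\geq \epsilon e(\cH)\bigr\}
\]
for a suitably small $\epsilon=\epsilon(d,\rho,r,H)>0$. Using Proposition~\ref{prop:KNRS-conjecture for cliques} applied with $m=v(H)$, the graph $\Gamma$ contains $\Theta(n^{v(H)})$ copies of~$H$, from which $v(\cH)=\Theta(n^2)$ and $e(\cH)=\Theta(n^{v(H)})$ follow. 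For a $j$-subset $T\subseteq V(\cH)$ whose edge projections span a subgraph $F\subseteq H$, the co-degree satisfies $d_\cH(T)=O(n^{v(H)-v(F)})$, and the degree conditions of Theorem~\ref{thm:container-BMS} reduce to $q\geq n^{-(v(F)-2)/(e(F)-1)}$ for every such~$F$, which holds by the definition of $m_2(H)$. This delivers a family $\cC$ of containers with $|\cC|\leq \exp\bigl(O\bigl(qv(\cH)\log(1/q)\bigr)\bigr)=\exp\bigl(O(n^{2-1/m_2(H)}\log n)\bigr)=\exp\bigl(o(pn^2)\bigr)$, using that $p=\omega(n^{-1/m_2(H)})$.

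The final step is a union bound over containers. To each $C\in\cC$ associate $E_C=\{e\in E(\Gamma):(e,i)\in C \text{ for some }i\in [r]\}$. If some bad~$\chi$ has $I_\chi\subseteq C$, then $E(\Gamma)\cap E(\bfg)\subseteq E_C$, so no edge in $E(\Gamma)\setminus E_C$ appears in $\bfg$. The central claim is that for every $C\in\cC$ one has $|E(\Gamma)\setminus E_C|\geq \gamma e(\Gamma)=\Omega(n^2)$ for some constant $\gamma>0$ depending on $d,\rho,\epsilon,r,H$. Granting this, each bad event has probability at most $(1-p)^{\gamma e(\Gamma)}\leq \exp(-\Omega(pn^2))$, and the union bound against $|\cC|\leq \exp(o(pn^2))$ yields failure probability $\exp(-\Omega(pn^2))$, as required.

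The main obstacle is to establish this claim. Suppose for contradiction that $|E_C|>(1-\gamma)e(\Gamma)$ for some small $\gamma>0$. By Proposition~\ref{prop:locally-dense_resilience}, the spanning subgraph $\Gamma_C$ with edge set $E_C$ is $(\rho,d/2)$-dense, and so by Proposition~\ref{prop:KNRS-conjecture for cliques} contains $\Omega(n^{v(H)})$ copies of~$H$. The task is to convert these copies into $\Omega(e(\cH))$ canonical edges of $\cH[C]$, contradicting $C\notin\cF$. Each $e\in E_C$ carries at least one index $i\in[r]$ with $(e,i)\in C$, so every copy $H_0\subseteq \Gamma_C$ admits at least one colour assignment from $C$. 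The delicate step is to argue, via pigeonholing over the bounded set of canonical pattern types (monochromatic, rainbow, and lexicographic with respect to $\sigma$) and over the bounded list length~$r$, that a positive constant fraction of these copies admit a \emph{canonical} colour assignment realized inside~$C$. This is the step where the list-constraint hypothesis is crucially used, as it is precisely what keeps the space of effective colourings per edge bounded and allows the pigeonhole to yield a constant density of canonical edges.
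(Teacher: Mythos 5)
Your framework is correct and essentially matches the paper's: the auxiliary hypergraph on $E(\Gamma)\times[r]$ encoding canonical copies, the application of the container theorem, and the union bound over containers. Your choice of abundant family $\cF$ (the maximal one) differs cosmetically from the paper's (which uses $\{W : e(G_W)\geq(1-\gamma)e(\Gamma)\}$), but both lead to the same pivotal claim: containers must miss a constant fraction of $E(\Gamma)$. However, there are two genuine gaps.

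The central gap is in the ``delicate step.'' You correctly reduce to showing that if $|E_C| \geq (1-\gamma)e(\Gamma)$, then $\cH[C]$ has $\Omega(e(\cH))$ edges, but your proposed mechanism---``pigeonholing over the bounded set of canonical pattern types \ldots and over the bounded list length $r$''---does not work. Once you fix, for each $e\in E_C$, one index $s$ with $(e,s)\in C$, you have a single colouring $\chi$ of $\Gamma_C$, and you need $\Omega(n^{v(H)})$ copies of $H$ in $\Gamma_C$ that $\chi$ happens to colour canonically. There is no pigeonhole to be had: most copies of $H$ in a coloured locally dense graph are not canonically coloured. The missing ingredient is a supersaturation version of the canonical Ramsey theorem (the paper's Lemma~\ref{lem:supersaturation-type}): one invokes Erd\H{o}s--Rado to get $R$ with $K_R\canarrow K_{v(H)}$, uses Proposition~\ref{prop:KNRS-conjecture for cliques} to find $\Omega(n^R)$ copies of $K_R$ in $\Gamma_C$, notes each such copy (under $\chi$) contains a canonical copy of $H$, and averages: each copy of $H$ sits in at most $n^{R-v(H)}$ copies of $K_R$, giving $\Omega(n^{v(H)})$ canonical copies. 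Relatedly, you misattribute the role of the list constraints: they are not needed for this supersaturation step (which works for any colouring); they are needed so that $v(\cH)=O(n^2)$, which is what makes the degree conditions of Theorem~\ref{thm:container-BMS} hold with $q=\Theta(n^{-1/m_2(H)})$.

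The second, more minor, gap is in the union bound. You bound $|\cC|\leq \exp(O(qv(\cH)\log(1/q)))=\exp(O(n^{2-1/m_2(H)}\log n))$ and assert this is $\exp(o(pn^2))$ using $p=\omega(n^{-1/m_2(H)})$. This is false in general: if, say, $p=n^{-1/m_2(H)}\sqrt{\log n}$, then $n^{2-1/m_2(H)}\log n$ is \emph{not} $o(pn^2)$. The paper avoids this by weighting the bad event for a fingerprint $S$ by $p^{e(G_S)}$ and grouping over fingerprint sizes; the sum $\sum_i \binom{n^2}{i}p^i \leq \max_{i\leq rDqn^2}(en^2p/i)^i$ gives $\exp(O(qn^2\log(p/q)))$, and $(q/p)\log(p/q)\to 0$ whenever $p=\omega(q)$. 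Your cruder accounting loses the factor $p^{e(G_S)}$ and needs an extra $\log n$ in the hypothesis on $p$, which the theorem statement does not grant.
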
   

Theorem \ref{thm:main} follows from
Theorem~\ref{thm:main_list_general} applied to $\Gamma=K_n$, which is
$(\rho,1)$-dense for all~$\rho>0$, and a union bound over the $v(H)!$
orderings $\sigma$ of $V(H)$.

\section{Proof}
\label{sec:fullproof}
In this section we make the outline given in Section~\ref{sec:proof}
precise, proving Theorem~\ref{thm:main_list_general}.

\subsection{A hypergraph encoding canonical copies}
\label{sec:hypergraph encoding}
With an eye to apply the hypergraph container theorem
(Theorem~\ref{thm:container-BMS}), we first define an appropriate hypergraph
that encodes the canonical copies of $H$ with respect to a colouring
that is compatible with some list assignment. We make the following
definition.

\begin{definition}[Canonical copy hypergraphs]
  \label{def:canon hypergraph}
  Given a graph $H$, an ordering $\sigma$ of $V(H)$, an integer $r\geq
  1$, an $n$-vertex graph $\Gamma$ and a list assignment
  $\cL:E(\Gamma)\rightarrow \bbn ^r$, we define the \emph{canonical
    copy hypergraph} $\cH=\cH^{\sigma}_H(\Gamma,\cL)$ as follows:

  The hypergraph $\cH$ is $e(H)$-uniform with $V(\cH)=E(\Gamma)\times
  [r]$. A collection $\{(e_i,s_i):1\leq i\leq e(H)\}\subseteq V(\cH)$
  of vertices form an edge of $\cH$ if and only if the collection
  $\{e_i:1\leq i\leq e(H)\}\subseteq E(\Gamma)$ form a copy of $H$ in
  $\Gamma$ that is canonical with respect to~$\sigma$ when each edge
  $e_i$ is coloured by the $s_i$-th colour $\cL(e_i)[s_i]$ of
  $\cL(e_i)$.
\end{definition}

Let~$\cH=\cH^{\sigma}_H(\Gamma,\cL)$ be a canonical copy hypergraph as
in Definition~\ref{def:canon hypergraph}.  For $W\subseteq V(\cH)$,
let the \emph{graph shadow} $G_W\subseteq\Gamma$ of~$W$ be the
subgraph of~$\Gamma$ spanned by
$E(G_W):=\{e\in \Gamma: (e,s)\in W \mbox{ for some } s\in [r]\}$.
Note that the vertex set~$W$ is a set of pairs in
$E(\Gamma)\times[r]$, and hence the graph shadow~$G_W$ is just the
graph obtained by projecting the vertices of~$W$ onto their first
coordinates.

We also need the following observation on the degrees of~$\cH$.

\begin{lemma}
  \label{lem:can copy hyp degrees}
  For any graph $H$, ordering $\sigma$ of $V(H)$, integer $r\geq 1$,
  graph $\Gamma$ with~$n$ vertices and list assignment
  $\cL:E(\Gamma)\rightarrow \bbn ^r$, the canonical copy hypergraph
  $\cH=\cH^{\sigma}_H(\Gamma,\cL)$ satisfies the following: for $1\leq
  j \leq e(H)$, we have $ \Delta_{j}(\calh) 
  \leq r^{e(H)} \left(n^{-1/m_2(H)}\right)^{j-1} n^{v(H)-2}.$
\end{lemma}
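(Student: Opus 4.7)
The plan is to fix a set $T=\{(e_1,s_1),\dots,(e_j,s_j)\}\subseteq V(\cH)$ of size exactly $j$ and to count the hyperedges of $\cH$ containing $T$, then to maximise over $T$. By Definition~\ref{def:canon hypergraph}, any such hyperedge is specified by (i) a copy $H'$ of $H$ in $\Gamma$ whose edge set contains $\{e_1,\dots,e_j\}$, together with (ii) a choice of colour indices $s_{j+1},\dots,s_{e(H)}\in[r]$ for the remaining $e(H)-j$ edges producing a canonical colouring. I would first observe that if the $e_i$ are not all distinct in $E(\Gamma)$, then no hyperedge contains $T$ (since the edges of a copy of $H$ are pairwise distinct), so we may assume the $e_i$ are distinct and together span a subgraph $F\subseteq\Gamma$ with $e(F)=j$ and some $v(F)$ vertices.

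For the counting, I would bound (ii) trivially by $r^{e(H)-j}\leq r^{e(H)}$, allowing arbitrary colour indices (dropping the canonicity requirement only loses information in the upper bound). For (i), I would enumerate the copies of $H$ in $K_n\supseteq\Gamma$ whose edge set contains the fixed set $\{e_1,\dots,e_j\}$ by first selecting a $j$-edge subgraph $F'\subseteq H$ together with an isomorphism to $F$---this is an $H$-dependent choice only---and then extending the partial embedding to the remaining $v(H)-v(F)$ vertices of $H$, which gives at most $n^{v(H)-v(F)}$ options. The resulting $H$-dependent constant is to be absorbed into the stated $r^{e(H)}$ factor, as is standard in this type of bound.

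The last step is where the definition of $m_2(H)$ comes in. If $j=1$ then $v(F)=2$ and $(n^{-1/m_2(H)})^{j-1}=1$, so the bound $n^{v(H)-v(F)}=n^{v(H)-2}$ is immediate. If $j\geq 2$ then $F$ is a subgraph of $H$ with $e(F)=j\geq 2$, hence $v(F)\geq 3$, and Definition~\ref{def:m2 density} applied to $F$ yields
\[
m_2(H)\;\geq\;\frac{e(F)-1}{v(F)-2}\;=\;\frac{j-1}{v(F)-2},
\]
i.e.\ $v(F)-2\geq (j-1)/m_2(H)$. Substituting gives $n^{v(H)-v(F)}\leq n^{v(H)-2}\bigl(n^{-1/m_2(H)}\bigr)^{j-1}$, and combining with the colour factor finishes the proof. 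I do not foresee any real obstacle: $m_2(H)$ is defined precisely so that this extension count works, and the only mild care needed is for the edge cases $j=1$ and the degenerate situation in which $T$ has two entries sharing a first coordinate, both of which are dispatched immediately.
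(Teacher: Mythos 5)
Your proposal takes essentially the same route as the paper's proof: fix a $j$-set $T\subseteq V(\cH)$, note that only distinct first coordinates contribute, let $F$ denote their span, separately bound the number of copies of $H$ extending $F$ and the number of colour-index choices for the remaining edges, and then convert $v(F)$ into $j$ via the definition of $m_2(H)$. You are, if anything, more careful than the paper in two respects: you explicitly dispatch the case of repeated $e_i$'s (without which the step $m_2(H)\geq (j-1)/(v(F)-2)$ uses $e(F)=j$, which need not hold when edges repeat), and you name the $H$-dependent constant hidden in the phrase ``at most $n^{v(H)-v(F)}$ extensions''.

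However, your suggestion to absorb that constant into the factor $r^{e(H)}$ is not valid: take $r=1$ and $H$ the path on three vertices, so $r^{e(H)}n^{v(H)-2}=n$, while a fixed edge lies in roughly $2n$ copies of $H$, all of which are canonical (monochromatic) when the lists are constant. The paper's own write-up silently suppresses this constant, claiming exactly $n^{v(H)-v(F)}$ extensions; the correct place to absorb an $H$-dependent constant is in the container theorem's free parameter $D_0$, which is already chosen as an $H$-dependent quantity in the proof of Theorem~\ref{thm:main_list_general}. So you have reproduced the paper's argument faithfully, including a small cosmetic looseness which you noticed but mis-repaired.
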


\begin{proof}
  Note that the result holds for $j=1$, as any edge of $H$ is in at
  most $r^{e(H)}n^{v(H)-2}$ canonical copies of~$H$ with respect
  to~$\sigma$. Then, we assume that $2\leq j \leq e(H)$ and fix an
  arbitrary $j$-element set $U=\{(e_i,s_i):1\leq i \leq j\}\subseteq
  V(\cH)$.  It suffices to show that the number $d_\cH(U)$ of edges of $\cH$
  containing $U$ is at most
  $r^{e(H)} \left(n^{-1/m_2(H)}\right)^{j-1} n^{v(H)-2}$.

  Consider the graph $F=G_U\subseteq\Gamma$ induced by the edge set
  $E(G_U) = \{e_i:1\leq i \leq j\}$. If $F$ is not a subgraph of some
  copy of $H$ in $\Gamma$, then $d_\cH(U)=0$ and we are done. If~$F$
  is a copy of a subgraph of $H$, then note that there are at most
  $n^{v(H)-v(F)}$ extensions of $F$ to a copy of $H$ in $K_n$ and at
  most $r^{e(H)-e(F)}$ choices of second coordinate for the vertices
  of~$\cH$ corresponding to the copy of $H$ that
  extends~$F$.  Therefore,
  \[d_\cH(U)\leq r^{e(H)-e(F)}n^{v(H)-v(F)} \leq
    r^{e(H)}n^{-(v(F)-2)}n^{v(H)-2}.\]
  By using the definition of
  $m_2(H)$, we conclude that
  \[
    d_\cH(U)\leq r^{e(H)}\left(n^{-\tfrac{v(F)-2}{j-1}}\right)^{j-1}n^{v(H)-2}\leq
    r^{e(H)} \left(n^{-1/m_2(H)}\right)^{j-1} n^{v(H)-2},
  \]
  as required.
\end{proof}

\subsection{Supersaturation for canonical copies}
\label{sec:supersat}
 
Next we prove a supersaturation-type result for canonical copies in
locally dense graphs.

\begin{lemma}[Canonical supersaturation]
  \label{lem:supersaturation-type}
  Let $H$ be a graph and let $d\in (0,1]$.  Then there exist
  $\rho,\, c_0>0$ such that for any $n$-vertex $(\rho,d)$-dense graph
  $\Gamma$ with large enough~$n$, any ordering~$\sigma$ of~$V(H)$ and
  any colouring $\chi:E(\Gamma)\rightarrow \bbn$, one can find at
  least~$c_0 n^{v(H)}$ copies of~$H$ in $\Gamma$ coloured by~$\chi$ in
  a canonical fashion with respect to~$\sigma$.
\end{lemma}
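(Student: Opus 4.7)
The plan is to reduce the statement to the classical canonical Ramsey theorem of Erd\H{o}s and Rado for complete graphs, combined with the clique counting provided by Proposition~\ref{prop:KNRS-conjecture for cliques}. Let $m := v(H)$ and let $N = N(m)$ be the integer given by the Erd\H{o}s--Rado theorem, so that every colouring of $E(K_N)$ contains an $m$-subset $M \subseteq V(K_N)$ on which the colouring is canonical \emph{in the Erd\H{o}s--Rado sense}: monochromatic, rainbow, min-lexicographic, or max-lexicographic with respect to some ordering of $M$.

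First, I would apply Proposition~\ref{prop:KNRS-conjecture for cliques} with parameters $N$ and $d$ to obtain constants $\rho, c_0' > 0$ such that every $n$-vertex $(\rho,d)$-dense graph $\Gamma$ contains at least $c_0' n^N$ copies of $K_N$. For any colouring $\chi : E(\Gamma) \to \mathbb{N}$ and any copy of $K_N$ in $\Gamma$, fixing an arbitrary ordering of its $N$ vertices and invoking the Erd\H{o}s--Rado theorem produces an Erd\H{o}s--Rado-canonical $K_m$ inside it.

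Next, I would show that every such Erd\H{o}s--Rado-canonical $K_m$ contains a copy of $H$ canonical with respect to $\sigma$ in the sense of Definition~\ref{def:canonical ramsey property}. The monochromatic and rainbow cases are immediate, since any bijective embedding of $V(H)$ into $V(K_m)$ inherits the same colour pattern. For the min-lex case with respect to some ordering $\tau$ of $V(K_m)$, I would embed $V(H)$ into $V(K_m)$ via the unique order-preserving bijection between $(V(H),\sigma)$ and $(V(K_m),\tau)$; composing the witness $\psi$ for $K_m$ being min-lex with this bijection gives an injective $\phi : V(H) \to \mathbb{N}$ witnessing that the embedded $H$ is lexicographic with respect to $\sigma$. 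The max-lex case reduces to the previous one by observing that a max-lex colouring with respect to $\tau$ coincides with a min-lex colouring with respect to the reverse ordering $\tau^{-1}$, so we embed using the order-preserving bijection between $\sigma$ and $\tau^{-1}$ instead.

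Finally, a routine double count concludes the proof: there are at least $c_0' n^N$ copies of $K_N$ and each produces at least one canonical copy of $H$, while every canonical copy of $H$ (supported on $m = v(H)$ vertices) is contained in at most $O(n^{N-m})$ copies of $K_N$ in $\Gamma$. Hence we obtain at least $c_0 n^{v(H)}$ canonical copies of $H$ for a suitable $c_0 > 0$. The main conceptual subtlety, rather than a genuine obstacle, is the mismatch between the four Erd\H{o}s--Rado canonical types and the three types in Definition~\ref{def:canonical ramsey property}, which is resolved by reversing the reference ordering to absorb the max-lex case into the min-lex one.
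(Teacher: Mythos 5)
Your proposal is correct and follows essentially the same approach as the paper: both invoke the Erd\H{o}s--Rado canonical Ramsey theorem to find a ``canonical'' $R := N(v(H))$, apply Proposition~\ref{prop:KNRS-conjecture for cliques} to count $\Omega(n^R)$ copies of $K_R$ in the $(\rho,d)$-dense graph, and conclude by a double count using the fact that each canonical copy of $H$ extends to at most $n^{R - v(H)}$ copies of $K_R$. The paper compresses the ``max-lex reduces to min-lex via the reverse ordering'' observation and the order-preserving embedding of $H$ into a canonically coloured $K_{v(H)}$ into the single assertion that $K_R \canarrow K_{v(H)}$ followed by ``in a canonical copy of $K_{v(H)}$ one can find a canonical copy of $H$ with respect to $\sigma$''; you spell out those details explicitly, which is a faithful unpacking of the same argument rather than a different one.
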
	
    
\begin{proof}
  Fix a graph $H$ and $d\in (0,1]$. It is well known by the classical
  canonical Ramsey theorem~\cite{ER50} that there is a
  positive integer $R$ such that $K_R \canarrow K_{v(H)}$. Now fix
  $\rho,c_0>0$ as output by Proposition~\ref{prop:KNRS-conjecture for
    cliques} with input $m=R$ and $d>0$. Then fix $\Gamma$, an
  ordering $\sigma$ of $V(H)$ and $\chi$ as in the statement.
 
  Note that in a canonical copy of $K_{v(H)}$ (with respect to any
  ordering), one can find a canonical copy of $H$ with respect to
  $\sigma$. By Proposition~\ref{prop:KNRS-conjecture for cliques}, the
  graph $\Gamma$ contains at least $c_0n^R$ copies of $K_R$ and, as
  $K_R \canarrow K_{v(H)}$, each of these copies contains a canonical
  copy of $K_{v(H)}$ and hence a canonical copy of $H$ with respect to
  $\sigma$. On the other hand, each copy of $H$ in $\Gamma$ is
  contained in at most $n^{R-v(H)}$ copies of $K_R$ (in
  $\Gamma$). Hence, the number of canonical copies of $H$ in $\Gamma$
  coloured by $\chi$ is at least $c_0 n^{R}/n^{R-v(H)} \geq
  c_0n^{v(H)}$.
\end{proof}

Using Lemma~\ref{lem:supersaturation-type}, we can show that the
family of sets of vertices of~$V(\cH)$ with large graph shadows are
abundant with respect to~$\cH$, in the sense of
Definition~\ref{def:H-abund}.

\begin{lemma}
  \label{lem:abund}
  For all $d>0$ and $1\leq r\in \bbn$, there exist
  $\rho,\,\gamma,\,\eps>0$ such that the following holds.

  Let~$\Gamma$ and~$H$ be graphs such that~$\Gamma$ is
  $(\rho,d)$-dense and $e(H)\geq1$, let~$\sigma$ be an ordering
  of~$V(H)$ and let $\cL:E(\Gamma)\rightarrow \bbn ^r$ be given.
  Furthermore, let $\cH=\cH^{\sigma}_H(\Gamma,\cL)$ and define
  $\cF:=\{W\subseteq V(\cH):e(G_W)\geq (1-\gamma)e(\Gamma)\}\subseteq
  2^{V(\cH)}$.  Suppose $n=v(\Gamma)$ is large enough.  Then~$\cF$ is
  $(\cH,\eps)$-abundant.
\end{lemma}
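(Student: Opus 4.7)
The plan is to verify each of the three conditions of Definition \ref{def:H-abund} in turn, with the third being the only substantive step. Set up the parameters as follows: apply Lemma \ref{lem:supersaturation-type} to the graph $H$ with density parameter $d/2$, obtaining constants $\rho_0,c_0>0$, and then put $\rho:=\rho_0$, $\gamma:=\rho^2 d/4$, and $\eps:=\min\{(1-\gamma)/r,\, c_0/r^{e(H)}\}$. The canonical copy hypergraph $\cH=\cH^{\sigma}_H(\Gamma,\cL)$ has $v(\cH)=r\cdot e(\Gamma)$, a fact we shall use repeatedly.

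Condition \ref{cond:1} (increasing) is immediate from the definition of the graph shadow: if $A\subseteq B\subseteq V(\cH)$ then $E(G_A)\subseteq E(G_B)$, so $A\in\cF$ forces $e(G_B)\geq e(G_A)\geq (1-\gamma)e(\Gamma)$ and hence $B\in\cF$. For condition \ref{cond:2}, fix any $W\in\cF$. Since distinct edges of $G_W$ correspond to distinct first coordinates of vertices in $W$, we have $|W|\geq e(G_W)\geq (1-\gamma)e(\Gamma)=((1-\gamma)/r)\cdot v(\cH)\geq \eps\cdot v(\cH)$.

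For condition \ref{cond:3}, fix $W\in\cF$. By our choice $\gamma=\rho^2 d/4$, Proposition \ref{prop:locally-dense_resilience} shows the spanning subgraph $G_W\subseteq \Gamma$ is itself $(\rho,d/2)$-dense. For each edge $e\in E(G_W)$, pick some $s_e\in[r]$ with $(e,s_e)\in W$ (possible by the definition of $G_W$) and set $\chi_W(e):=\cL(e)[s_e]$; this defines a proper edge colouring of $G_W$. Apply Lemma \ref{lem:supersaturation-type} to $G_W$, the ordering $\sigma$ and the colouring $\chi_W$ to obtain at least $c_0 n^{v(H)}$ copies of $H$ in $G_W$ that are canonical with respect to $\sigma$ under $\chi_W$. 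For each such copy with edges $e_1,\ldots,e_{e(H)}$, the vertex set $\{(e_i,s_{e_i}):1\leq i\leq e(H)\}\subseteq W$ is, by construction and by Definition \ref{def:canon hypergraph}, a hyperedge of $\cH[W]$, and distinct copies plainly yield distinct hyperedges. Finally, since each copy of $H$ in $\Gamma$ generates at most $r^{e(H)}$ hyperedges of $\cH$, we have $e(\cH)\leq r^{e(H)}n^{v(H)}$, and therefore $e(\cH[W])\geq c_0 n^{v(H)} \geq (c_0/r^{e(H)})\, e(\cH)\geq \eps\cdot e(\cH)$.

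The only mildly subtle point—hardly a real obstacle—is in verifying condition \ref{cond:3}: one must combine the resilience of locally dense graphs (Proposition \ref{prop:locally-dense_resilience}), the choice of the auxiliary colouring $\chi_W$ dictated by the membership of $W$, and the canonical supersaturation of Lemma \ref{lem:supersaturation-type}, and then observe that the canonical copies produced in $G_W$ lift back to hyperedges of $\cH$ which remain inside $W$ precisely because of how $\chi_W$ was chosen. Conditions \ref{cond:1} and \ref{cond:2} are then pure bookkeeping.
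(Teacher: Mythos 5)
Your proof is correct and follows essentially the same route as the paper: apply the supersaturation lemma with density $d/2$ to fix $\rho,c_0$, take $\gamma$ at the threshold $\rho^2 d/4$ so that Proposition~\ref{prop:locally-dense_resilience} applies to $G_W$, build the auxiliary colouring $\chi_W$ from a choice of second coordinate for each edge in the shadow, and lift canonical copies back to hyperedges of $\cH[W]$. The only cosmetic difference is that you make the constraint $\eps\leq(1-\gamma)/r$ explicit via the minimum, whereas the paper absorbs it into the cruder bound $|W|\geq e(\Gamma)/2$ together with $\eps< c_0/(2r^{e(H)})$.
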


\begin{proof}
  Fix $\rho,\,c_0>0$ as output by Lemma~\ref{lem:supersaturation-type}
  with input $d/2$ rather than $d$. Furthermore, fix $0<\gamma\leq
  \rho^2d/4$ and $0<\eps<c_0/(2r^{e(H)})$. We aim to show that $\cF$
  is $(\cH,\eps)$-abundant and so we need to establish conditions
  \eqref{cond:1}--\eqref{cond:3} of Definition \ref{def:H-abund}. Note
  that \eqref{cond:1} is immediate from the definition of $\cF$ and
  condition \eqref{cond:2} follows from our definition of $\eps$ and
  the fact that any $W\in \cF$ must have $|W|\geq
  (1-\gamma)e(\Gamma)\geq e(\Gamma)/2=v(\cH)/(2r)$.

  Therefore it remains to prove condition \eqref{cond:3}. For this,
  fix $W\in \cF$ and consider the graph shadow $G=G_W$. Moreover,
  define a colouring $\chi:E(G)\rightarrow \bbn$ so that for each edge
  $e\in E(G)$ we have that $\chi(e)=\cL(e)[s]$ for some $s$ with
  $(e,s)\in W$. Now note that each copy of $H$ in $G$ coloured by
  $\chi$, which is canonical with respect to $\sigma$, corresponds to
  an edge in~$\cH[W]$.  Therefore it suffices to prove the existence
  of~$\eps e(\cH)$ canonical copies of~$H$ in~$G$ coloured by~$\chi$.
  Let~$G'$ be the spanning subgraph of~$\Gamma$ with edge set~$E(G)$
  (that is, add to~$G$ the vertices in $V(\Gamma)\setminus V(G)$ as
  isolated vertices).  By the choice of~$\gamma$ and
  Proposition~\ref{prop:locally-dense_resilience}, the graph~$G'$ is
  $(\rho,d/2)$-dense and so, by Lemma~\ref{lem:supersaturation-type},
  contains at least~$c_0n^{v(H)}$ canonical copies of~$H$ with respect
  to~$\sigma$.  Our assumption that~$H$ has no isolated vertices
  implies that these copies are subgraphs of~$G$.  The lemma thus
  follows from our definition of~$\eps$ and the fact that
  $e(\cH)\leq n^{v(H)}r^{e(H)}$.
\end{proof}

\subsection{Proof of Theorem \ref{thm:main_list_general}}

We are now in a position to prove our main result.

\begin{proof}[Proof of Theorem \ref{thm:main_list_general}]
  Let $H$, $1\leq r\in \bbn$ and $d>0$ be as in the statement and let
  $\rho,\gamma,\eps, c_0>0$ be small enough for us to be able to apply
  Lemmas~\ref{lem:supersaturation-type} and~\ref{lem:abund}. Further,
  fix $0<c<\gamma d/16$, $D_0=r^{e(H)+1}/c_0$, and let $D>0$ be as
  output by Theorem~\ref{thm:container-BMS} applied with $k=e(H)$.
  
  Next we fix an ordering~$\sigma$ of~$V(H)$, some $n$-vertex
  $(\rho,d)$-dense graph~$\Gamma$ with~$n$ large enough and some list
  assignment $\cL:E(\Gamma)\rightarrow \bbn^r$.  Let
  $\cH=\cH^{\sigma}_H(\Gamma,\cL)$ be the canonical copy hypergraph as
  in Definition \ref{def:canon hypergraph} and $\cF\subseteq
  2^{V(\cH)}$ be as in Lemma~\ref{lem:abund}. We claim that
  all the conditions of the Hypergraph Container Theorem
  (Theorem~\ref{thm:container-BMS}) are satisfied 
  with $q:=n^{-1/m_2(H)}$. Indeed the condition that $\cF$ is
  $(\cH,\eps)$-abundant is given by Lemma~\ref{lem:abund} and the
  degree 
  conditions on $\cH$ follow from Lemma \ref{lem:can copy hyp degrees}
  and the facts that $v(\cH)=r\binom{n}{2}\leq rn^2$ and $e(\cH)\geq
  c_0 n^{v(H)}$, as can be seen by applying
  Lemma~\ref{lem:supersaturation-type} to the colouring defined by
  each edge taking, say, the first colour in its list.
    
  By Theorem~\ref{thm:container-BMS} we thus get a collection of
  fingerprints $\cS\subseteq 2^{V(\cH)}$ such that $|S|\leq Dqv(\cH)$
  for all $S\in \cS$. Moreover, there are two functions $f : \cals \to
  2^{V(\cH)}\setminus \cF$ and $g : \cali(\calh) \to \cals$ such that
  for each $I\in \cali(\calh)$, we have that $g(I) \subset I$ and
  $I\setminus g(I) \subseteq f(g(I))$. Now for each $S\in \cS$, define
  $C(S):= f(S)\cup S \subseteq V(\cH)$. We claim that for all $S\in
  \cS$, we have that
  \begin{equation}
    \label{eq:missing edges}
    e\big(\Gamma\setminus G_{C(S)}\big)\geq
    \frac{\gamma}{2}e(\Gamma)\geq \frac{\gamma d}{8}n^2.
  \end{equation}
  Indeed this follows because $f(S)\notin \cF$ and so
  $e(G_{f(S)})<(1-\gamma)e(\Gamma)$ and as $n$ is sufficiently large,
  $e(G_S)\leq |S|\leq Dqv(\cH)\leq \gamma e(\Gamma)/2$.

  For an $n$-vertex graph $G\subseteq \Gamma$ and a colouring $\chi
  :E(G)\rightarrow \bbn$ compatible with $\cL$, let
  $W(G,\chi)\subseteq V(\cH)$ be the set of vertices $(e,s)\in V(\cH)$
  such that $e\in E(G)$ and $\chi(e)=\cL(e)[s]$. Note that if the
  graph $G$, when coloured by $\chi$, contains no canonical copies of
  $H$ with respect to $\sigma$, then $W(G,\chi)\in \cI(\cH)$, that is,
  $W(G,\chi)$ is an independent set. Therefore, by property
  \ref{item: containment} of Theorem \ref{thm:container-BMS}, the
  event that $\bfg\sim \bfg(n,p)$ is such that there is a colouring
  of $\bfg\cap\Gamma$ compatible with~$\cL$ that avoids canonical
  copies of~$H$ is contained in the event 
  $\bigcup_{S\in \cS}\Phi(S)$, where
  \[
    \Phi(S):=\{\exists\cL\mbox{-compatible
    }\chi:E(\bfg\cap\Gamma)\rightarrow \bbn \mbox{ such that } S\subseteq
    W(\bfg,\chi)\subseteq C(S)\}.
  \]
Now note that if~$\Phi(S)$ occurs then $G_S\subseteq \bfg$ and
$\bfg\setminus G_S\subseteq G_{C(S)}$, and these events are
independent.  Hence for a fixed $S\in \cS$, we have  
\begin{linenomath}
  \begin{align*}
    \Pr[\Phi(S)]&\leq \Pr[G_S\subseteq \bfg]\cdot \Pr[\bfg\setminus G_S\subseteq G_{C(S)}]\\ 
                &\leq p^{e(G_S)} \cdot \Pr[\bfg\cap (\Gamma\setminus G_{C(S)})=\emptyset]\\ 
                & \leq p^{e(G_S)} \cdot (1-p)^{\gamma d n^2/8}\\
                & \leq p^{e(G_S)} e^{-\gamma d n^2p/8},
\end{align*}
\end{linenomath}
using \eqref{eq:missing edges} in the penultimate inequality.
Therefore, the probability that there is an $\cL$-compatible colouring
of $\bfg\cap\Gamma$ avoiding canonical copies of~$H$ with respect
to~$\sigma$ is at most
\begin{equation}
  \label{eq:0}
  \sum_{S\in\cS} \Pr[\Phi(S)]\leq e^{-\gamma d
    n^2p/8}\sum_{S\in\cS} p^{e(G_S)}.
\end{equation}
For a given $0\leq t\leq e(\Gamma)$, the number of
sets~$S\subset V(\cH)$ such that~$e(G_S)=t$ is at most
${e(\Gamma)\choose t}(2^r-1)^t\leq{n^2\choose t}2^{rt}$.
Theorem~\ref{thm:container-BMS}\ref{item: not many containers} tells 
us that $e(G_S)\leq T$ for every $S\in\cS$, where $T=rDqn^2$.  Hence,
the summation on the right-hand side of~\eqref{eq:0} is at most
\begin{equation}
  \label{eq:1}
  \sum_{0\leq t\leq T}{n^2\choose t}2^{rt}p^t
  \leq2{n^2\choose T}2^{rT}p^T\leq2\left(e2^rn^2p\over T\right)^T,
\end{equation}
where the first inequality follows by comparing the sum with a
geometric series, assuming that $p\geq2^{1-r}rDq$.  Thus, the
right-hand side of~\eqref{eq:0} is at most
\begin{linenomath}
  \begin{align*}
    2e^{-\gamma dn^2p/8}\left(e2^rn^2p\over T\right)^T
    &\leq2e^{-\gamma dn^2p/8}\left(e2^rn^2p\over rDqn^2\right)^{rDqn^2}\\
    &\leq2\exp\left\{n^2p\left(
      -{1\over8}\gamma d+rD{q\over p}\log{e2^rp\over rD q}
      \right)\right\}\\
    &\leq e^{-\gamma dn^2p/10},
\end{align*}
\end{linenomath}
as long as~$p/q$ is larger than some constant that depends only
on~$r$, $D$, $\gamma$ and~$d$.  This completes the proof.
\end{proof}

\section{Hypergraphs} \label{sec:hyperproof}
In this section, we discuss the proof of Theorem~\ref{thm:main-hyper}, which is very similar to our proof of Theorem \ref{thm:main} with essentially only notational differences. Indeed, it is  well known that one of the key advantages of the method of hypergraph containers is that proofs in the graph case easily generalise to hypergraphs. In any case, for completeness, we discuss some of the details. 

Firstly, note that we concentrate solely on the random subgraph
$\bfg^{(k)}(n,p)$ and so do not need to worry about a locally dense
host $\Gamma$ as in the proof of Theorem~\ref{thm:main_list_general}
(see the discussion at the end of this section). This leads to some
simplifications in the proof. As in Definition \ref{def:canon
  hypergraph}, for the hypergraph $H$ in question, an ordering
$\sigma$ of $V(H)$, an integer $r \geq 1$ and a list assignment
$\cL:E(K_n^{(k)})\rightarrow \NN^r$, we define an auxiliary hypergraph
$\cH=\cH_H^\sigma(\cL)$ on vertex set $V(\cH)=E(K_n^{(k)})\times [r]$
whose edges encode canonical copies (Definition \ref{def:canon
  colourings}) of $H$ with respect to $\sigma$, when choosing colours
from the lists. As in Lemma 4.2, it follows from the
definition~\eqref{eq:mkdenisty} of $m_k(H)$ that
\begin{equation} \label{eq:hyp degrees}
  \Delta_j(\cH)\leq r^{e(H)}(n^{-1/m_{k}(H)})^{j-1}n^{v(H)-k},
\end{equation}
for all $1\leq j\leq e(H)$. Also as in the proof for graphs, for a
subset $W\subseteq V(\cH)=E(K_n^{(k)})\times [r]$, we define the
\emph{hypergraph shadow} $G_W$ to be the subhypergraph
of $K_n^{(k)}$ obtained by taking the hyperedges featured in $W$. We
further define $\cF=\cF(\gamma):=\{W\subseteq V(\cH):e(G_W)\geq
(1-\gamma)\binom{n}{k}\}$ to be the vertex subsets of $V(\cH)$ with
large hypergraph shadow.

We then fix some small $\gamma,\eps>0$ and need to show that
$\cF=\cF(\gamma)$ is $(\cH,\eps)$-abundant (Definition
\ref{def:H-abund}). Conditions \eqref{cond:1} and \eqref{cond:2}
follow directly from the definition of $\cF$ and for~\eqref{cond:3},
we need to prove supersaturation of canonical copies. As in Lemma
\ref{lem:supersaturation-type}, this can be done by averaging. Indeed,
by the canonical Ramsey theorem of Erd\H{o}s and Rado \cite{ER50},
there is some $R\in \NN$ such that any colouring of $K_R^{(k)}$
results in a canonical copy of $K^ {(k)}_{v(H)}$ and hence a canonical
copy of $H$ (with respect to $\sigma$). On the other hand, every copy
of~$H$ in $K_n^{(k)}$ is contained in at most $n^{R-v(H)}$ copies of
$K_{R}^{(k)}$. Therefore, as in Lemma~\ref{lem:supersaturation-type},
any colouring of~$K_n^{(k)}$ results in at least~$c_0n^{v(H)}$
canonical copies of~$H$ with respect to~$\sigma$, for some $c_0>0$.
This in turn implies that~$\cH$ is $(\cF,\eps)$-dense
(recall~\eqref{cond:3}), as given some $A\in \cF$, one can colour the
edges $e$ of $G_A$ with a colour in $\cL(e)$ corresponding to a vertex
$(e,s)\in A$ and colour edges of $K_n^{(k)}\setminus G_A$ arbitrarily.
This gives rise to at least~$c_0n^{v(H)}$ canonical copies of~$H$ and
by deleting any such copy of~$H$ containing an edge of
$K_n^{(k)}\setminus G_A$, we destroy at most~$\gamma n^{v(H)}$ of
these copies.  Thus, as~$\gamma$ is taken to be much smaller
than~$c_0$, we get at least $c_0n^{v(H)}/2\geq \eps e(\cH) $ canonical
copies of~$H$, each of which corresponds to an edge in~$\cH[A]$.

 We are therefore in a position to apply Theorem \ref{thm:container-BMS} to $\cH$ and $\cF$ with $q=n^{-1/m_k(H)}$, also considering \eqref{eq:hyp degrees}. The rest of the proof follows analogously to the proof of Theorem~\ref{thm:main_list_general}, by performing a union bound over the containers $C(S):=f(S)\cup S\subseteq V(\cH)$ output by Theorem \ref{thm:container-BMS} (as well as a union bound over orderings $\sigma$ of $V(H)$). As in \eqref{eq:missing edges}, we crucially use that if $\bfg\sim \bfg^{(k)}(n,p)$ can be coloured in a canonical $H$-free way that corresponds to an independent set lying in some fixed container $C(S)$, then as $f(S)\notin \cF$, there is some set of $\Omega(n^k)$ edges of $K_n^{(k)}$ that are not hit by $\bfg$, an event that occurs with probability less than $e^{-\Omega(n^kp)}$.

 We close by remarking that a  generalisation of Theorem \ref{thm:main_list_general} to hypergraphs would be more complicated than the version (Theorem \ref{thm:main-hyper}) we give here. Indeed, the straightforward generalisation of a locally dense $k$-graph $\Gamma$, namely that sets $S\subseteq V(\Gamma)$ of size $o(n)$ have positive density, is not enough to guarantee the supersaturation of copies of cliques, or even a single copy of a $K_{k+1}^{(k)}$ in $\Gamma$ (as noted by R\"odl using a construction from \cite{EH72}). A much stronger pseudorandom condition would be needed for $\Gamma$ so that an analogue of Theorem \ref{thm:main_list_general} holds for hypergraphs of larger uniformity. Whilst this should certainly be possible, as we do not yet have applications for such a result, we do not pursue this here. 

\subsection*{Acknowledgements} We are grateful to the anonymous referees for their careful reading of the previous version of the manuscript and their helpful suggestions. 
 
\bibliographystyle{amsplain} 
\bibliography{bibliography-mrefed}

\end{document}